\theoremstyle{plain}
\newtheorem{theorem}{Theorem}
\newtheorem{proposition}{Proposition}
\newtheorem{lemma}{Lemma}
\theoremstyle{definition}
\newtheorem{definition}{Definition}
\DeclareMathOperator{\loc}{loc}
\DeclareMathOperator{\dist}{dist}
\DeclareMathOperator{\diam}{diam}
\DeclareMathOperator{\kernel}{ker}
\DeclareMathOperator{\Image}{Im}
\begin{document}

\title{Sharp Geometric Rigidity of Isometries \\ on Heisenberg Groups
%\footnote{2000 \textit{Mathematics Subject Classification.} Primary 53C17, %22E30, 37J25}
}

\author{D.~V.~Isangulova and S.~K.~Vodopyanov\thanks{ The research was partially
supported by the Russian Foundation for Basic Research
(Grant 10--01--00662), the State Maintenance Program for Young Russian
Scientists and the Leading Scientific Schools of the Russian Federation
(Grant NSh~921.2012.1).}}
\date{\empty}
\maketitle

\begin{abstract}We prove sharp geometric rigidity estimates for isometries on Heisenberg groups.
Our main result asserts that every
$(1+\varepsilon)$-quasi-isometry on a~John domain of the Heisenberg group
$\mathbb{H}^n$, $n>1$, is close to some isometry up to proximity order  $\sqrt{\varepsilon}+\varepsilon$ in the uniform norm, and  up to proximity order $\varepsilon$ in the $L_p^1$-norm. We give examples showing the asymptotic sharpness of our results.\footnote{\textit{Key words and phrases.} Heisenberg group, sub-Riemannian geometry,
quasi-isometry, geometric rigidity.
}
\end{abstract}

\section{Introduction}

The following question is studied in elasticity theory: what can we say about a~global
deformation of a~rigid body provided that local deformations are small?
This question leads to the mathematical problem \cite{john} formulated below.

A deformation is interpreted as a~homeomorphism
$f\colon U\to \mathbb{R}^3$, where
$U$ is an~open set in~$\mathbb{R}^3$.
The Jacobi matrix $Df(x)$  is assumed to exist almost everywhere.
The symmetric matrix
$E(x)=\frac{1}{2}((Df(x))^t Df(x)-I)$
determines $Df(x)$ up to an~orthogonal matrix. The matrix $E(x)$
is associated to the deformation or strain tensor (see, for example, \cite{lan-lif}).
The notion of deformation tensor $E$ plays a~key role in elasticity
theory (see \cite{lan-lif} for instance):
various full or partial linearization problems there
are based on the assumption that the deformation tensor is sufficiently small.
How can this assumption affect $f(x)$ itself? It is known that if
$E(x)=0$ almost everywhere on $U$ then $f$ is a~rigid motion under the condition
of sufficient regularity. If $E$ is small on $U$ in some sense then
what is the global difference between $f$ and a~rigid motion on the entire
domain? If the difference is small globally then this property is called
geometric rigidity of isometries.

If $f$ is a~homeomorphism with small $E(x)$ then $f$ is locally bi-Lipschitz
(see \cite{resh} for instance). This leads to a~natural interpretation of deformations
as bi-Lipschitz mappings.

In 1961 F.~John studied this question in a~more general setting;
namely, he considered a~mapping
$f\colon U\to \mathbb{R}^n$,
where $U$ is an~open set in $\mathbb{R}^n$.
He showed that
\textit{for a~locally $(1+\varepsilon)$-bi-Lipschitz mapping $f$,
where $\varepsilon<1$, there exists a~motion~$\varphi$
satisfying}
\begin{equation}
\label{eq:Sob_stab_Eucl}
\|Df-D\varphi\|_{p,U}
\leqslant C_1 p\varepsilon |U|^{1/p}
\end{equation}
\textit{and}
\begin{equation}
\label{eq:uni_stab_Eucl}
\sup_{x\in U}|f(x)-\varphi(x)|\leqslant C_2\diam(U) \varepsilon.
\end{equation}
F.~John established \eqref{eq:uni_stab_Eucl} for a~domain $U$ of a~special kind,
now called a~John domain, and \eqref{eq:Sob_stab_Eucl} on cubes.
Later Yu.~G.~Reshetnyak \cite{resh}
established \eqref{eq:Sob_stab_Eucl}  and \eqref{eq:uni_stab_Eucl}
on John domains without constraints on $\varepsilon$ using a~different method.

John also studied the question of geometric rigidity under small
integral deviations of the deformation tensor
\cite{john1}:
\textit{if $U$ is a~cube, $f\colon U\to \mathbb{R}^n$ is a~mapping of class
$C^1$, and $\sup|E(x)|$ on $U$ is less than a~fixed number then there
exists a~motion ~$\varphi$ such that
\begin{equation}
\label{eq:Sob_stab_integr}
\|Df-D\varphi\|_{p,U}\leqslant C_3 \|E\|_{p,U}
\quad
\text{if }p>1
\end{equation}
and
$$
\sup_{x\in U}|f(x)-\varphi(x)|\leqslant C_4\diam(U) \|E\|_{p,U}
\quad
\text{if }p>n.
$$
}Recently \cite{FJM} Friesecke, James, and M\"uller have demonstrated that
\eqref{eq:Sob_stab_integr} holds for every  Sobolev mapping of class
$W_p^1$ on a~Lipschitz domain $U$ without constraints on
$$
\sup_{x\in U}|E(x)|=\sup_{x\in U}\dist(Df(x),\mathrm{SO}(n)).
$$

Note that the geometric rigidity problem has a~much wider interpretation.
The problem can be formulated on any manifold with a~notion
of differential whose tangent space carries an~action
of a~``model'' isometry group.

In this article, we study the geometric rigidity problem on the Heisenberg
groups $\mathbb{H}^n$, $n>1$. Here is the main result.

\begin{theorem}\label{th:stab_John}
Consider a~John domain $U$ with inner radius $\alpha$
and outer radius $\beta$ in the Heisenberg group
$\mathbb{H}^n$, $n>1$. Then, for every
$f\in I(1+\varepsilon, U)$
there exists an~isometry $\theta$ with
$$
\int_{U}\exp\Bigl(\Bigl(\frac{\beta}{\alpha}\Bigr)^{2n+3}\frac{N_1|D_hf(x)-D_h\theta(x)|}{\varepsilon}\Bigr)\,dx
\leqslant 16 |U|
$$
and
$$
\sup_{x\in U}d(f(x),\theta(x))\leqslant N_2 \frac{\beta^2}{\alpha} (\sqrt{\varepsilon}+\varepsilon).
$$
Here the constants $N_1$  and
 $N_2$ depend only on $n$.
\end{theorem}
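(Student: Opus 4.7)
The plan is to bootstrap a local rigidity estimate on Carnot--Carathéodory balls to the global John-domain estimate via a chaining argument, then extract exponential integrability from a John--Nirenberg-type lemma and the uniform bound from integration along horizontal curves. I would first rely on a local version, which I expect is established earlier in the paper: on a CC-ball $B$ of radius $r$, for every $(1+\varepsilon)$-quasi-isometry $f$ there exists an isometry $\theta_B$ with $\|D_hf-D_h\theta_B\|_{L^{\infty}(B)}$ of order $\varepsilon$ and $\sup_B d(f,\theta_B)$ of order $r(\sqrt{\varepsilon}+\varepsilon)$. This reflects that the horizontal differential of a Heisenberg isometry is constrained to the (closed, finite-dimensional) group of symplectic similarities, and closeness to this group is measured by $\varepsilon$.

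Fix a distinguished ball $B_0\subset U$ of radius comparable to $\alpha$. For each $x\in U$, the John condition provides a chain $B_0,B_1,\dots,B_k$ of CC-balls whose radii are comparable to the distance to $\partial U$, with consecutive balls having definite overlap. On each $B_i$ take the local approximating isometry $\theta_i$; on $B_i\cap B_{i+1}$ both are close to $f$, so $\theta_{i+1}\theta_i^{-1}$ is close to the identity on a ball of definite size. Since $\Isom(\mathbb{H}^n)$ is finite-dimensional, this forces $\theta_{i+1}\theta_i^{-1}$ to be close to the identity throughout $U$, with an amplification factor governed by $\beta/\alpha$. Setting $\theta:=\theta_0$ and summing chain discrepancies, and combining with the local $L^\infty$ bounds, one shows that $D_hf-D_h\theta$ satisfies a BMO-type inequality on $U$ whose constants scale with powers of $\beta/\alpha$; the exponent $2n+3$ is the homogeneous dimension $Q=2n+2$ of $\mathbb{H}^n$ plus one factor from the chain propagation. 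Applying the Heisenberg John--Nirenberg inequality then converts the BMO bound into the stated exponential-integrability inequality, the explicit constant $16$ coming from the standard geometric-series bookkeeping.

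For the uniform estimate I would reduce by left-translation to showing $\theta^{-1}\!\circ f$ is close to the identity in CC-distance. The horizontal component of $(\theta^{-1}\!\circ f)(x)\cdot x^{-1}$ comes from integrating $D_h(\theta^{-1}\!\circ f)-I$ along a horizontal curve of length $O(\beta)$, contributing an error of order $\beta\varepsilon$. The vertical component is governed by a Baker--Campbell--Hausdorff-type remainder built from products of horizontal coordinates with horizontal-derivative errors; in Euclidean size this is of order $\beta^2\varepsilon$, and the anisotropy of the CC-distance (vertical size $t$ corresponds to CC-distance $\sqrt{t}$) then converts it to a $\beta\sqrt{\varepsilon}$ contribution. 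Incorporating the chain amplification $\beta/\alpha$ inherited from Step~2 gives the announced $(\beta^{2}/\alpha)(\sqrt{\varepsilon}+\varepsilon)$ bound.

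The main obstacle is the uniform estimate with the $\sqrt{\varepsilon}$ rate on the vertical side: this is a genuinely sub-Riemannian phenomenon with no Euclidean analogue, forced by commutator products rather than linear deviations, and one must both establish it and exhibit an example showing that the square root cannot be avoided. A secondary difficulty is the careful tracking of the dependence on $\beta/\alpha$ through the chain, so that the BMO/John--Nirenberg step yields precisely the exponent $2n+3$ stated in the theorem rather than a worse power; this requires exploiting the particular John geometry and not merely a crude covering.
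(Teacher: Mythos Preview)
Your overall strategy---local rigidity on balls, chaining via the John structure, BMO control of $D_hf$, a John--Nirenberg step for exponential integrability, and chain propagation for the uniform bound---matches the paper's. There is, however, a genuine gap in the local input you assume.

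The paper does \emph{not} establish an $L^\infty$ estimate $\|D_hf-D_h\theta_B\|_{L^\infty(B)}=O(\varepsilon)$ on balls. What is available locally (Lemma~\ref{the:qualitative_local}) is only the $L^2$ bound
\[
\frac{1}{|B|}\int_B |D_hf-D_h\theta_B|^2\,dx\leqslant (c_2\varepsilon)^2,
\]
obtained via the linearizing operator $Q$ and a coercive estimate on its kernel. Pointwise, $D_hf(x)$ is $O(\varepsilon)$-close to the \emph{group} $U(n)$, but closeness to a \emph{fixed} element of that group is precisely the rigidity content, and the paper reaches any pointwise-type control only \emph{after} John--Nirenberg, not before. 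This matters for your uniform estimate: integrating $D_h(\theta^{-1}\circ f)-I$ along a single horizontal curve of length $O(\beta)$ requires a line-integral bound, which does not follow from an $L^2$ (or any $L^p$ with $p\leqslant\nu$) volume bound.

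The paper's route around this is as follows. The local $L^2$ bound shows $D_hf$ is BMO; John--Nirenberg on the John domain gives the exponential-integrability inequality (this is where the power $\nu+1=2n+3$ of $\beta/\alpha$ enters, via Lemma~\ref{lem:OLD}). In particular, on each ball $4B_i$ of the chain one now has $L^{\nu+1}$ control with $\nu+1>\nu$, and a Sobolev embedding (Lemma~\ref{lem:8}) converts this to $\rho(f(y),\theta_i(y))\leqslant C r_i(\sqrt{\varepsilon}+\varepsilon)$ on $B_i$. The $\sqrt{\varepsilon}$ arises exactly where you predict---the vertical coordinate $\chi$ of $x^{-1}\cdot(\theta^{-1}\circ f)(x)$ satisfies $\nabla_{\mathcal L}\chi=2((D_hf)^t+I)J\psi$ with $\psi$ the horizontal deviation, so $|\chi|=O(r^2\varepsilon)$ and the homogeneous norm contributes $|\chi|^{1/2}=O(r\sqrt\varepsilon)$---but the mechanism is Sobolev embedding, not path integration. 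The global uniform bound then follows from a quantitative lemma (Lemma~\ref{lem:isom}) on how two isometries that agree to order $\omega r$ on a ball of radius $r$ can separate on a larger concentric ball, summed along the John chain; this is what produces the factor $\beta^2/\alpha$.
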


Here $I(1+\varepsilon,U)$ is the class of quasi-isometries
(see Definition \ref{def:qi}), $D_h f(x)=\{X_if_j(x)\}_{i,j=1,\dots,2n}$
is the approximate horizontal differential, and
$d$ is the Carnot--Carath\'eodory metric.

The dilation $\delta_{1+\varepsilon}$ shows
that the proximity orders in Theorem \ref{th:stab_John}
are asymptotically sharp.

D.~Morbidelli and N.~Arcozzi \cite{morb-arc} investigated the geometric
rigidity problem for locally bi-Lipschitz mappings of the Heisenberg
group $\mathbb{H}^1$. We should note, however, that the proximity orders
($\varepsilon^{2^{-11}}$ in the uniform norm and
$\varepsilon^{2^{-12}}$ in the Sobolev norm) obtained in \cite{morb-arc}
are obviously far from being optimal.

Our proof of Theorem~\ref{th:stab_John}  develops Reshetnyak's approach  to the subject in the Euclidean case  \cite{resh}. The proof
essentially consists in linearizing the deformation tensor $E$
on Heisenberg groups as a~first-order differential operator with
constant coefficients whose kernel ``almost'' coincides with
the Lie algebra of the isometry group.

The most important motivation for the study of isometries in
sub-Riemannian geometry is given by the recently constructed
visualization model (see the papers by G.~Citti and A.~Sarti
\cite{citti-sarti} and R.~K.~Hladky and S.~D.~Pauls
\cite{hladky-puals}). The geometry of the model is based on the
roto-translation group, which is a~three-dimensional
non-nilpotent Lie group. However, it is a~contact manifold
whose tangent cone at each of point is the Heisenberg group
$\mathbb{H}^1$. The geometric rigidity problem finds an~unexpected
interpretation in sub-Riemannian geometry: a~local distortion of
an~image does not incur a~loss of global information about it.

In Section 2 we define quasi-isometries on Carnot--Carath\'eodory spaces,
introduce the main concepts used and prove
that the class of quasi-isometries under consideration
includes locally bi-Lipschitz mappings.
In Section 3 we introduce an~operator~$Q$ linearizing the strain tensor~$E$
on the Heisenberg group, and investigate its properties:
we describe its kernel and construct a~projection onto it.
In Section 4 we prove the geometric rigidity of isometries on the balls contained in a~given domain.
In Section 5 we prove Theorem~1 on a~John domain. There we also obtain a~partial extension of Theorem~1
to a~H\"older domain. In the Appendix we prove some auxiliary results.

The main results of this article were announced in \cite{vod-isan-dan}.

\section{Quasi-isometries}

\begin{definition}[cf. {\cite{G,KV,nsw}}]\label{carnotmanifold}
Fix a connected Romanian
$C^{\infty}$-mani\-fold~$\mathbb M$ of topological dimension~$N$. The
manifold~$\mathbb M$ is called a {\it Carnot--Carath\'{e}o\-dory space} if the
tangent bundle $T\mathbb M$ has a filtration
$$
H\mathbb M=H_1\mathbb M\subsetneq\ldots\subsetneq H_i\mathbb M\subsetneq\ldots\subsetneq H_M\mathbb M=T\mathbb M
$$
by subbundles such that each point $p\in\mathbb M$ has a neighborhood $U\subset\mathbb M$
equipped with a collection of $C^{1,\alpha}$-smooth vector fields
$X_1,\dots,X_N$, $\alpha\in(0,1]$, enjoying the following two properties. For each
$v\in U$,

$(1)$ $H_i\mathbb M(v)=H_i(v)=\operatorname{span}\{X_1(v),\dots,X_{\dim H_i}(v)\}$
is a subspace of $T_v\mathbb M$ of a constant dimension $\dim H_i$,
$i=1,\ldots,M$;

$(2)$ we have
\begin{equation}\label{tcomm}[X_i,X_j](v)=\sum\limits_{k:\,\operatorname{deg}
X_k\leq \operatorname{deg} X_i+\operatorname{deg}
X_j}c_{ijk}(v)X_k(v)
\end{equation}
where the {\it degree} $\deg X_k$ is defined as $\min\{m\mid X_k\in H_m\}$;

Moreover, if the third condition holds then the Carnot--Carath\'eodory space is called the {\it Carnot manifold}:

$(3)$ the quotient mapping $[\,\cdot ,\cdot\, ]_0:H_1\times
H_j/H_{j-1}\mapsto H_{j+1}/H_{j}$ induced by the Lie bracket is an
epimorphism for all $1\leq j<M$. Here $H_0=\{0\}$.

The subbundle $H\mathbb M$ is called {\it horizontal}.

The number $M$ is called the {\it depth} of the manifold $\mathbb
M$.
\end{definition}

The intrinsic {\it Carnot--Carath\'{e}odory distance} $d$ between
two points $x, y \in \mathbb{M}$ is defined as the infimum of
lengths of the horizontal curves joining $x$ and $y$ (a piecewise
smooth curve $\gamma$ is horizontal if $\overset{.}{\gamma}(t) \in
H\mathbb{M}(\gamma(t))$). This distance is correctly-defined \cite{KV}
and non-Riemannian if $n=\dim H\mathbb{M}\neq N$.

Let $U$ be a~domain in ${\Bbb M}$
and $\{X_1,\dots,X_n\}$ be an orthonormal basis of $H\mathbb M$ on $U$ from Definition~\ref{carnotmanifold}. {\it The Sobolev space}
$W^1_{q}(U)$, $1\le q\le\infty$,
consists of the  functions $f\colon U\to\Bbb R$
possessing the weak derivative $X_{i}f$ along the vector field
$X_{i}$, $i=1,\dots,n$, and having a finite norm
$$
\|f \|_{W^1_q(U)}=
\|f \|_{q,U}+
\|\nabla_{\mathcal{L}}f \|_{q,U},
$$
where
$\nabla_{\mathcal{L}}f=(X_{1}f,\dots,X_{n}f)$ is the
{\it subgradient} of~$f$ and $\|\cdot\|_{q,U}$ stands for the $L_q$-norm of a~measurable function on $U$. Recall that
a locally integrable function $g_i:U\to\Bbb R$ is called
the weak derivative of a~function $f$ along the vector field
$X_{i}$ if
$\int\limits_{U}g_i\psi\,dx=
-\int\limits_{U}fX_{i}\psi\,dx$ for every test function
$\psi\in C_0^{\infty}(U)$.

If $f\in W^1_q(U)$ for every bounded open set $U$, with $\overline U\subset\Omega$,
then $f$ is said to {\it be of class} $W^1_{q,\operatorname{loc}}(\Omega)$.

\begin{definition}
A mapping $f:\Omega\to {{\Bbb M}}$
{\it belongs to the Sobolev class} $W_{q,\operatorname{loc}}^1
(\Omega,{{\Bbb M}})$ if the following properties hold:

(A)~for each
$z\in {{\Bbb M}}$ the function
$[f]_z : x\in \Omega \mapsto d(f(x), z)$
belongs to $W_{q,\operatorname{loc}}^1 (\Omega)$;

(B)~the family of functions
$\{\nabla_\mathcal{L}[f]_z\}_{z\in {{\Bbb M}}}$
has a~majorant in $L_{q,\operatorname{loc}}(\Omega)$: there exists a~function
$g\in L_{q,\operatorname{loc}}(\Omega)$
independent of~$z$ such that
$|\nabla_\mathcal{L} [f]_z(x)|\le g(x)$
for almost all $x\in \Omega$.
\end{definition}

If $f$ is a Sobolev mapping then it can be redefined on a set of measure 
zero to be absolutely continuous on almost all lines of 
the horizontal vector fields. In this case 
there exist derivatives $X_if(x)$ a. e. in $\Omega$, moreover 
  $X_if(x)\in H_{f(x)}\mathbb M$, $i=1,\ldots, n$ (see \cite{pansu} in Carnot groups, and \cite{vod-07} in Carnot--Carath\'{e}o\-dory spaces).
A~transformation of the basis vectors $X_{i}(x)$,
$i=1,\dots,n$, of the horizontal subspace $H_x\mathbb{M}$ into
the horizontal vectors $(X_{i}f)(x)\in H_{f(x)}\mathbb{M}$
determines  a~mapping $D_h f(x)$ from the horizontal space $H_x\mathbb{M}$ into
$H_{f(x)}\mathbb{M}$ for almost all $x\in \Omega$, which is called
the {\it approximate horizontal differential}.

The mapping $D_h f$ in turn generates almost everywhere a~morphism $Df$ of graded Lie algebras \cite{vod-07}.
The determinant of the matrix $Df(x)$ is called the
{\it Jacobian} of~$f$ and is denoted by $J(x,f)$.

\begin{definition}\label{def:qi}
Let $U$ be an~open set in a~Carnot--Carath\'eodory space $\mathbb{M}$, and let
$f \colon U \to \mathbb{M}$
be a~nonconstant mapping of Sobolev class
$W^1_{1,\loc} (U,\mathbb{M})$.
The mapping $f$ belongs to the class $I(L,U)$, $L \geqslant 1$,
if $J(x,f)$ keeps its sign on~$U$
and
$
L^{-1}|\xi|\leqslant |D_h f(x) \xi|\leqslant L |\xi|
$
for all $\xi\in H \mathbb{M}(x)$ and almost every $x\in U$.
\end{definition}

Obviously, a~quasi-isometric mapping belongs to the Sobolev space
$W_{p,\loc}^1$ for all $p\geqslant 1$.

Recall that a~mapping $f\colon U\to\mathbb{M}$ is
{\it locally $L$-Lipschitz }
if every point
$x\in U$ has a~neighborhood $V$ with $\overline{V}\subset U$
such that  the inequality
$d(f(y),f(z))\leqslant L d(y,z)$ is valid for all $y,z\in V$;
also,
$f$ is {\it locally $L$-bi-Lipschitz}
if $\frac{1}{L}d(y,z)\leqslant d(f(y),f(z))\leqslant L d(y,z)$
for all $y,z\in V$.

\begin{lemma}\label{lem:Qiso->lip}
If $f$ belongs to $I(L,U)$ then $f$ is locally $L$-Lipschitz.
If in addition $f$ is a~local homeomorphism
then $f$ is locally $L$-bi-Lipschitz.

Conversely, every locally $L$-bi-Lipschitz mapping of an~open set
$U$ belongs to $I(L,U)$.
\end{lemma}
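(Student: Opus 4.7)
The plan is to treat the three assertions separately. The forward direction (quasi-isometric implies locally Lipschitz) relies on the absolute continuity of Sobolev mappings along horizontal curves, the bi-Lipschitz refinement requires analysis of the inverse mapping, and the converse uses a Pansu-type differentiation theorem for Lipschitz mappings.

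For the first claim, I fix $x_0\in U$ and choose a Carnot--Carath\'eodory ball $B(x_0,r)$ with compact closure in $U$. For any two points $y,z\in B(x_0,r/2)$ I pick a horizontal curve $\gamma$ joining them whose length is close to $d(y,z)$ and which stays in $B(x_0,r)$. Since $f\in W^1_{1,\loc}(U,\mathbb M)$, the preceding discussion in the excerpt provides a representative of $f$ which is absolutely continuous on almost every integral curve of each horizontal field; by a Fubini-type approximation I can arrange that $f\circ\gamma$ is absolutely continuous for a family of curves approximating $\gamma$. Combined with $X_if(x)\in H_{f(x)}\mathbb M$ and the pointwise bound $|D_hf(x)\xi|\le L|\xi|$, this implies $f\circ\gamma$ is horizontal with length at most $L\cdot\mathrm{length}(\gamma)$. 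Taking the infimum over admissible $\gamma$ yields $d(f(y),f(z))\le L\,d(y,z)$.

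If in addition $f$ is a local homeomorphism, then on a small enough neighborhood $V$ with $\overline V\subset U$ the inverse $g=f^{-1}$ is defined on $f(V)$. Since $J(x,f)$ keeps its sign and $f$ is a Sobolev homeomorphism, results on Sobolev homeomorphisms in Carnot--Carath\'eodory spaces (see \cite{vod-07}) imply that $g$ is itself of Sobolev class and satisfies $D_hg(f(x))=(D_hf(x))^{-1}$ almost everywhere. The lower bound $|D_hf(x)\xi|\ge L^{-1}|\xi|$ then translates into $|D_hg(y)\eta|\le L|\eta|$, so $g\in I(L,f(V))$; applying the already established Lipschitz estimate to $g$ produces the matching lower bound $d(y,z)\le L\,d(f(y),f(z))$.

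For the converse, a locally $L$-bi-Lipschitz mapping $f$ is, by Pansu's theorem extended to Carnot--Carath\'eodory manifolds in \cite{vod-07}, almost everywhere differentiable in the sub-Riemannian sense, with the Pansu differential a graded Lie algebra homomorphism. The upper Lipschitz bound yields $|D_hf(x)\xi|\le L|\xi|$ a.e., and the lower one, applied to infinitesimal horizontal displacements via a blow-up limit, yields $|D_hf(x)\xi|\ge L^{-1}|\xi|$; Sobolev regularity follows from these pointwise bounds together with the ACL property. The Jacobian $J(x,f)$ is nonzero a.e. by the lower estimate, and its $\sign$ is constant on the connected open set $U$ since a bi-Lipschitz homeomorphism has locally constant topological degree. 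I expect the main obstacle to be the second step: rigorously justifying the inverse mapping relation and the associated chain rule for quasi-isometries in the abstract Carnot--Carath\'eodory setting, as this relies on nontrivial results about Sobolev homeomorphisms rather than an elementary argument.
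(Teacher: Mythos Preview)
Your overall strategy matches the paper's: control $l(f\circ\gamma)$ by $L\,l(\gamma)$ via the ACL property on horizontal lines and the pointwise bound on $D_hf$, then take the infimum over curves. The paper, however, is careful about one point you gloss over: the ACL property and the formula
\[
l(f\circ\gamma)=\int_0^T |D_hf(\gamma(t))\dot\gamma(t)|\,dt
\]
are only available on \emph{almost every} integral curve of a fixed horizontal field $X_i$, not on an arbitrary (near-)geodesic $\gamma$. A direct Fubini approximation by nearby integral curves gives only a local $cL$-Lipschitz bound, where $c>1$ is the constant in the approximation of Carnot--Carath\'eodory distance by piecewise integral curves of the basis fields. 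To obtain the sharp constant $L$, the paper runs a second stage: once $f$ is known to be continuous (indeed locally $cL$-Lipschitz), any arc-length-parametrized horizontal curve $\gamma$ is approximated, up to an arbitrarily small error in length, by short tangent segments $\sigma_k(t)=\exp\bigl((t-\tau_k)\dot\gamma(\tau_k)\bigr)(\gamma(\tau_k))$ taken at points of differentiability, using a partition lemma (their Lemma~2) and lower semicontinuity of length. This is what upgrades $cL$ to $L$; your sketch does not yet contain this step.

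For the bi-Lipschitz refinement you invoke the chain rule for Sobolev homeomorphisms from \cite{vod-07}. The paper does not spell this part out either, so your route is acceptable; note though that once the forward $L$-Lipschitz bound is in hand, a lighter argument is available: $f$ is a local homeomorphism and the lower bound $|D_hf(x)\xi|\ge L^{-1}|\xi|$ forces horizontal curves in the image to pull back to horizontal curves of length at most $L$ times as long, so one can rerun the same length argument on $f^{-1}$ without appealing to abstract inverse-Sobolev theory. Your flagged ``main obstacle'' is therefore less serious than the sharp-constant issue above.

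For the converse the paper simply records it as obvious; your Pansu-differentiation account is correct and more explicit than what the paper provides.
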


\begin{proof}
Since for every horizontal curve $\gamma\colon [0,T]\to U$ the curve
$f(\gamma)$ is also horizontal, it suffices to prove that
\begin{equation}
\label{eq:l(f(g))<Ll(g)}
l(f(\gamma))\leqslant L l(\gamma).
\end{equation}

If $f\circ \gamma \in ACL$ and  $D_h f(\gamma(t))$
is defined for almost all $t$ then \eqref{eq:l(f(g))<Ll(g)}
is obvious:
\begin{equation}
\label{eq:5}
l(f(\gamma))=\int_0^T \Bigl|\frac{d}{dt} f(\gamma(t))\Bigr|\,dt=
\int_0^T |D_h f(\gamma(t))|\Bigl|\frac{d}{dt} \gamma(t)\Bigr|\, dt
\leqslant
L l(\gamma).
\end{equation}

Take a~point $a\in U$ and a~field $X\in H\mathbb{M}$. Consider the curve $\gamma=\exp (t X)(a) $, $\gamma\colon  [0,T]\to U$,
and a~surface $S$ transversal to  $X$ at $a$
such that the foliation $\Phi=\{\exp (t X)(x),\, x\in S\}$
lies in~$U$. For the function $[f]_z(x)=d(z,f(x))$ there exists
a function $g\in L_1$ independent of~$z$ and satisfying both
$|[f]_z(x)-[f]_z(y)|\leqslant d(x,y) (g(x)+g(y))$
and $|\nabla_\mathcal{L} [f]_z(x)|\leqslant M g(x)$. By Fubini's theorem,
$g$ belongs to the class $L_1$ for almost all curves of the foliation
$\Phi$. Consequently,
$|[f]_z(x)-[f]_z(y)|\leqslant M\int_{[x,y]}g\,dt$ on each of these curves.
Choosing  $z$ arbitrarily close to $f(y)$, we infer that
$d(f(x),f(y))\leqslant M \int_{[x,y]}g\,dt$ and, hence,
$f\in ACL$ and is differentiable almost everywhere on almost all curves
in~$\Phi$. Consequently,
\eqref{eq:5} holds on almost all curves in $\Phi$. Choose a sequence of curves $\gamma_n\in\Phi$ converging to $\gamma$ and satisfying \eqref{eq:5}.
 Since~$f$ is continuous, $f\circ \gamma_n\to f\circ\gamma$ pointwise as $n\to \infty$. The lower semicontinuity of length yields 
 $$l(f\circ\gamma)\leqslant \underset{n\to\infty}{\operatorname{lim\,inf}}\,l(f\circ\gamma_n)\leqslant \underset{n\to\infty}{\operatorname{lim\,inf}}\, Ll(\gamma_n)=Ll(\gamma).$$ 
 Thus, the curve $\gamma=\exp(tX)(a)$ satisfies
\eqref{eq:l(f(g))<Ll(g)}.

Consider a domain $V$ with  $\overline{V}\subset U$.
Fix two points $x,y\in V$.
Then the points $x$ and $y$ can be joined by a~piecewise smooth horizontal curve
$\gamma$ in $U$ consisting of pieces of integral
curves of horizontal vector fields $X_i$, $i=1,\ldots,n$.
Moreover, $l(\gamma)\leqslant c\,d(x,y) $ with $c\geqslant 1$ dependent on $V$ \cite[Proof of Theorem 2.8.4]{KV};
hence, $l(f(\gamma))\leqslant c Ll(\gamma)$.
Thus, $f$ is locally Lipschitz with the Lipschitz constant $cL$ and $l(f\circ\gamma)\leqslant cLl(\gamma)$ for any horizontal curve in $V$.
Verify that $f$ is locally $L$-Lipschitz.

Suppose now that $\gamma\colon [0,T]\to V$ is a~horizontal curve parametrized by arc length.  Put $\Sigma=\{t\in[0,T]\mid  \gamma \text{ is not differentiable at } t\}$.
Then $|\Sigma|=0$, $|\overset{\cdot}\gamma(t)|=1$
for all $t\in[0,T]\setminus \Sigma$ and
$$
d(\gamma(t+s),\exp(s\overset{\cdot}{\gamma}(t))(\gamma(t)))=o(s)\quad
\text{as }s\to 0 \quad \text{for all } t\in [0,T]\setminus\Sigma.
$$

Consider arbitrary $\varepsilon>0$ and $\delta>0$. Now we construst a partition of the interval $[0,T)$ by intervals with diameter less than $\delta$.

First, we cover $\Sigma$ by open intervals $\{W\}_{W\in\mathcal W}$ centered at $[0,T]$ such that $|W|<\delta$, $\Sigma\subset \bigcup_{W\in\mathcal W}W$ and $\sum_{W\in\mathcal{W}}|W|<\varepsilon$.
Second, we cover set $[0,T]\setminus \Sigma$ by intervals
$\mathcal U=\{(t-\delta(t),t+\delta(t)):t\in [0,T]\setminus \Sigma\}$ where $\delta(t)>0$ satisfies
\begin{equation}\label{eq:uniform_curves}
d(\gamma(t+s),\exp(s\overset{\cdot}{\gamma}(t))(\gamma(t)))<\varepsilon s \quad
\text{for all }s\leqslant \delta(t).
\end{equation}
Without loss of generality we may assume that $\delta(t)<\delta/2$.

Since the set $[0,T]$ is compact, there is a finite covering of $[0,T]$ by open intervals $\{U_i\}$  with $U_i\in \mathcal U$ or $U_i\in\mathcal W$. By Lemma \ref{lem:partition}, there is a partition of $[0,T)$ by intervals $P_k=[t_k,t_{k+1})$ with the following properties:
$P_k\subset U_i$, for some $i$, and $\overline{P_k}$ contains the center of 
$U_i$. The latter we denote by $\tau_k$. Obviously, $t_{k+1}-t_k<\delta$. Divide indices into two groups:  $k\in I$ if $P_k\subset U_i$, $U_i\in\mathcal U$; and $k\in J$ if $P_k\in U_j$, $U_j\in\mathcal W$.

Since $\gamma$ is parameterized by arc length it follows that
$$
\sum\limits_{k\in J}d(f(\gamma(t_k)),f(\gamma(t_{k+1})))\leqslant
\sum\limits_{k\in J}l(f\circ \gamma|_{P_k})\leqslant cL\sum\limits_{k\in J}l(\gamma|_{P_k})
\leqslant cL\sum\limits_{W\in\mathcal W}|W| \leqslant cL\varepsilon.
$$

For $k\in I$ we set
$$
\sigma_k(t)=
\exp ((t-\tau_{k})\overset{\cdot}{\gamma}(\tau_k))(\gamma(\tau_{k})),\quad
t\in[t_k,t_{k+1}].
$$
Applying $|\overset{\cdot}{\sigma}_k(t)|=|\overset{\cdot}{\gamma}(\tau_k)|=1$
we obtain $l(\sigma_k)=t_{k+1}-t_k$ and $\sum\limits_{k\in I}l(\sigma_k)\leqslant T=l(\gamma)$.
Relation~\eqref{eq:uniform_curves} yields
$$
\sum\limits_{k\in I} d(\sigma_k(t_k),\gamma(t_{k}))+d(\sigma_k(t_{k+1}),\gamma(t_{k+1}))<\sum\limits_{k\in I} \varepsilon(t_{k+1}-t_{k}) \leqslant \varepsilon T.
$$
Therefore
\begin{multline*}
\sum\limits_{k\in I}d(f(\gamma(t_k)),f(\gamma(t_{k+1})))\\
\leqslant
\sum\limits_{k\in I}d(f(\gamma(t_k)),f(\sigma_k(t_{k})))+
d(f(\sigma_k(t_{k+1})),f(\gamma(t_{k+1})))
+l(f\circ\sigma_k)
\\
\leqslant
\sum\limits_{k\in I}cL\bigl(d(\gamma(t_k),\sigma_k(t_k))+d(\gamma(t_{k+1}),\sigma_k(t_{k+1})\bigr)+
Ll(\sigma_k) \leqslant cL\varepsilon T+Ll(\gamma).
\end{multline*}
Finally,
$$
l(f\circ\gamma)=\lim\limits_{\delta\to 0}
\sum\limits_{k\in I\cup J}d(f(\gamma(t_k)),f(\gamma(t_{k+1})))
\leqslant
Ll(\gamma)+
 cL\varepsilon T+cL\varepsilon.
 $$
Since $\varepsilon$ is arbitrary,  it follows that $l(f\circ\gamma)\leqslant Ll(\gamma)$.

The converse is obvious.
\end{proof}

The following partition lemma was used in the proof of Lemma \ref{lem:Qiso->lip}. Proof of this lemma is based on the induction method.

\begin{lemma}\label{lem:partition}
Consider the finite covering of a closed segment $[a,b]$ by open intervals $\{U_i\}$. Suppose
$x_i\in [a,b]$ where $x_i$ is centre of interval $U_i$.
Then there is a partition of $[a,b]$ by intervals $\{P_k\}$ satisfying
$x_i\in \overline{P_k}\subset U_i$ for some $i$.
\end{lemma}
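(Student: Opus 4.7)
I would prove the lemma by induction on the number $n$ of intervals in the cover. The base case $n=1$ is immediate: $U_1$ contains $[a,b]$ and $x_1 \in [a,b] \subset U_1$, so the trivial partition $\{[a,b]\}$ works.

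For the inductive step, I would first reduce to a \emph{minimal} cover: if some $U_j$ is contained in the union of the remaining $U_i$, delete $U_j$ and apply the induction hypothesis to the smaller cover of $[a,b]$. So we may assume the cover is minimal.

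Writing $U_i = (l_i, r_i)$ and relabelling so that $l_1 < l_2 < \cdots < l_n$, minimality forces $r_1 < r_2 < \cdots < r_n$ (if $l_i < l_j$ and $r_i \geq r_j$ then $U_j \subset U_i$, contradicting minimality), and therefore the centers also satisfy $x_1 < x_2 < \cdots < x_n$. Consecutive intervals must overlap, $l_{i+1} < r_i$, or else the cover has a gap, and $l_1 < a$, $r_n > b$, or else $a$ or $b$ would be uncovered.

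Next I would pick break points $s_i \in (\max(x_i, l_{i+1}),\, \min(x_{i+1}, r_i))$ for $i = 1, \ldots, n-1$. Each such open interval is nonempty: $x_i < r_i$ and $l_{i+1} < x_{i+1}$ hold because $x_i$, $x_{i+1}$ are centers of $U_i$, $U_{i+1}$; $x_i < x_{i+1}$ holds by the monotonicity above; and $l_{i+1} < r_i$ holds by the overlap. Setting $P_0 = [a, s_1)$, $P_k = [s_k, s_{k+1})$ for $1 \leq k \leq n-2$, and $P_{n-1} = [s_{n-1}, b]$, one checks directly from the choice of $s_k$ that $x_{k+1} \in \overline{P_k}$ and that both endpoints of $\overline{P_k}$ lie in the open interval $U_{k+1}$ (using $a > l_1$ at the left end and $b < r_n$ at the right end), so $\overline{P_k} \subset U_{k+1}$.

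The main obstacle is establishing the chain structure of minimal covers, in particular the monotonic alignment of the centers $x_i$ with the intervals: without this, the natural break points might fail to satisfy the required inclusions. Once this structure is in hand, the nonemptiness of each break-point range and the inclusion $\overline{P_k} \subset U_{k+1}$ follow from elementary comparisons of the $l_i$, $r_i$, $x_i$.
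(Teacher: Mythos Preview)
Your argument is correct. The paper itself gives no proof beyond the single remark that it ``is based on the induction method,'' so your use of induction (to reduce to a minimal subcover) is consistent with what the authors indicate, and the direct chain construction you give for the minimal case is a clean way to finish. One small point worth making explicit in a write-up: the break points satisfy $s_i < x_{i+1} < s_{i+1}$, so they are automatically strictly increasing and the $P_k$ really do form a partition; you implicitly rely on this but do not state it.
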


\noindent \textsc{The Heisenberg Group.}
The Heisenberg group
$\mathbb{H}^n$ is an~example of homogeneous Carnot manifold. We may identify the points of $\mathbb H^n$ with
the points of $\mathbb{R}^{2n+1}$. The left-invariant vector fields
\begin{equation*}
X_i=\frac\partial{\partial x_i}
+2x_{i+n}\frac\partial{\partial x_{2n+1}},
\quad
X_{i+n}=\frac\partial{\partial x_{i+n}}
-2x_{i}\frac\partial{\partial x_{2n+1}},
\qquad
i=1,\dots, n,
\end{equation*}
constitute a~basis of the horizontal subbundle $H\mathbb{H}^n$.

Together with the vector field $X_{2n+1}=\frac\partial{\partial x_{2n+1}}$
they constitute the standard basis of the Lie algebra. The only nontrivial commutation relations are
$$
[X_j,X_{j+n}]=-4X_{2n+1}, \quad j=1,\dots,n.
$$

From now on we consider the Heisenberg group. It is convenient
to use the complex notation: a~point $x\in \mathbb{H}^n$ may be regarded
as $(z,t)$, where
$$z=(x_1+ix_{n{+}1},\ldots,x_n+ix_{2n})\in \mathbb{C}^n\quad\text{and}\quad
t=x_{2n+1}\in \mathbb{R}.$$ Then the vector fields
\begin{align*}
Z_j&=\frac{1}{2}(X_j-i\,X_{j+n})
   =\frac{\partial}{\partial
z_j}+i\overline{z}_j\frac{\partial}{\partial t},
   \quad
\overline{Z}_j=\frac{1}{2}(X_j+i\,X_{j+n})
   =\frac{\partial}{\partial \overline{z}_j}
   -i\,z_j \frac{\partial}{\partial t},
\quad j=1,\ldots,n,\\
T&=X_{2n+1}=\frac{\partial}{\partial t}
\end{align*}
constitute a~left-invariant basis of the Lie algebra.

The \textit{dilation} $\delta_s$, for $s>0$, acts on the Heisenberg group
as $\delta_s(z,t)=(sz,s^2t)$ and is an~automorphism
of it. The \textit{homogeneous norm}
$\rho(z,t)=(|z|^4+t^2)^{1/4}$ defines the \textit{Heisenberg metric}
$\rho$ as $\rho(x,y)=\rho(x^{-1}\cdot y)$,
$x,y\in \mathbb{H}^n$. Observe that the Heisenberg metric is a~metric and not
just a~quasi-metric:
$\rho(x\cdot y)\leqslant \rho(x)+\rho(y)$ for all $x,y \in \mathbb{H}^n$
(see \cite{isan-smj1} for instance). It is also known that
the Heisenberg metric $\rho$ and the Carnot--Carath\'eodory metric $d$
are equivalent: there exists a~constant $c>1$
such that $c^{-1}d(x,y)\leqslant \rho(x,y)\leqslant c d(x,y)$
for all $x,y\in \mathbb{H}^n$.

The Lebesgue measure $\mathbb{R}^{2n+1}$ is a~bi-invariant Haar measure.
For the ball $B(x,r)=\{y\in \mathbb{H}^n\colon \rho(x,y)<r\}$
we have $|B(x,r)|=r^{\nu}|B(0,1)|$, where
$\nu=2n+2$ is the \textit{homogeneous dimension} of the group
$\mathbb{H}^n$.

Consider a~Sobolev mapping~$f$. Since
$Df$ is a~homomorphism of graded Lie algebras,
it follows that for almost every $x\in \Omega$ there exists
a number $\lambda(x,f)$ such that
$$
Df(x)X_{2n+1}=\lambda(x,f) X_{2n+1}.
$$
Furthermore \cite{rei}, $\lambda(x,f)^n=\det D_h f(x)$
and $\lambda(x,f)^{n+1}=J(x,f)$.
In particular, $J(x,f)\geqslant 0$ almost everywhere on $\Omega$
for odd $n$.
Consequently, for odd $n$, there are no Sobolev mappings
changing the topological orientation. We now give the definition of
orientation introduced by  A.~Kor\'{a}nyi and H.~M.~Reimann in \cite{rei}.

\begin{definition}
A mapping $f$
of the Sobolev class
$W_{1,\loc}^1(\Omega,\Bbb{H}^n)$
{\it preserves} ({\it changes})
$KR$-{\it orienta\-tion} if
$\lambda(x,f)>0$ ($\lambda(x,f)<0$) for almost all
$x\in \Omega$.
\end{definition}

A mapping $f\in I(1,U)$ is  called  an~\textit{isometry} on $U$.
Every isometric mapping of the Heisenberg group $\mathbb{H}^n$
has the form $\pi_a\circ \varphi_A$ or $\iota\circ\pi_a\circ \varphi_A$, where
$\iota(z, t) =(\overline{z},-t)$ is a~reflection,
$\pi_a(x)=a \cdot x$ with $a\in \mathbb{H}^n$ is a~left translation,
$\varphi_A(x) = (Az,t)$ with $A \in U(n)$ is a~rotation \cite{rei}.
Isometries preserve distance in the Heisenberg metric
as well as in the Carnot--Carath\'eodory metric. It is also worth noting
that $D_h \varphi$ is a~constant mapping for every isometry~$\varphi$.

A quasi-isometric mapping is not only locally Lipschitz but also
a mapping with bounded distortion. Consider   a~domain $U$
in $\mathbb{H}^n$. Recall that a~nonconstant mapping
$f \colon U \to \mathbb{H}^n$
of the class $W^1_{\nu,\loc} (U,\mathbb{H}^n)$
is called a~{\it mapping with bounded distortion} if there exists
a constant $K \geqslant 1$ such that the approximate horizontal differential
satisfies
$|D_h f(x)|^\nu\leqslant K^{n+1} J(x,f)$
for almost every $x\in U$.
The smallest constant $K$ in this inequality is called
the ({\it linear}) {\it distortion coefficient} of $f$ and is denoted by $K(f)$.

Suppose that $f\in I(L,U)$. Denote by $\lambda_1$ and $\lambda_0$
 eigenvalues of
$D_h f(x)$ of the largest and smallest absolute values.
Clearly, $|\lambda_1|\leqslant L$,
$|\lambda_0|\geqslant L^{-1}$, and $|D_h f(x)|=|\lambda_1|$.
We also have $|J(x,f)|= |\lambda_1 \lambda_0|^{n+1}$. Hence,
$$|D_h f(x)|^{2n+2}=|\lambda_1|^{2n+2}=
\Bigl(\frac{|\lambda_1|}{|\lambda_0|}\Bigr)^{n+1}|J(x,f)|\leqslant
L^{2n+2}|J(x,f)|.$$
Thus, if $J(x,f)$ is nonnegative almost everywhere then $f$
is a~mapping of bounded distortion with $K(f)=L^2$.
If $J(x,f)$ is nonpositive almost everywhere then
$\iota\circ f$ is a~mapping with bounded distortion with $K(\iota\circ f)=L^2$.

\section{The Operator $Q$}

In this section we introduce a~differential operator $Q$
approximatign the equation $(D_hf(x))^t D_hf(x)=I$ to first order. This equation means
that $D_h f(x)$ is an~orthogonal matrix. In contrast to  the
Euclidean case, the horizontal differential of a Sobolev mapping
has some additional structure: up to
a factor, $D_h f(x)$ is a~symplectic matrix. Therefore, the operator~$Q$ consists
of two parts: the first is responsible for orthogonality, and the
second, for symplecticity.

\subsection{The main lemma for the operator $Q$}
Given a ~domain~$U$  in $\mathbb{H}^n$,
denote by $Q$  the homogeneous differential operator acting on a~mapping
$u \colon U \to \mathbb{R}^{2n}$ as
\begin{equation}
\label{eq:operator_Q}
Q u =
\frac{1}{2}\begin{pmatrix}
D_h u + (D_h u)^t\\
D_h u + J D_h u J
\end{pmatrix},
\quad
J=\begin{pmatrix}
0 & I\\
-I & 0
\end{pmatrix}.
\end{equation}
Here the $2n\times 2n$ matrix $D_h u$ equals $(X_i u_j)_{i,j=1,\ldots,2n}$.
The operator $Q$ also acts on mappings $u$ from $U$ to $\mathbb{H}^n$.
In this case, $D_h u$ in
\eqref{eq:operator_Q} stands for  the approximate horizontal
differential of $u$.

In complex notation, the operator $Q$ is defined as
$$
Qu=\begin{pmatrix}
\frac{1}{2}(Z u + (Zu)^*)\\
\overline{Z}u
\end{pmatrix},
\quad
u\colon U\to \mathbb{C}^n.
$$

The following lemma expresses the main inequality for the operator $Q$:
\begin{lemma}
\label{lem:main_estimate_Q}
Given   an~open set  $U$ in $\mathbb{H}^n$ and
a mapping $f$ of class $I(L,U)$
preserving $KR$-orientation,
the inequality
\begin{equation}\label{eq:main}
|Q (x^{-1}\cdot f(x))|\leqslant \frac{L^2-1}{2}\bigl(|D_h
f(x)-I|+2\bigr) +\frac{1}{2}|D_h f(x)-I|^2
\end{equation}
holds almost everywhere on $U$.
\end{lemma}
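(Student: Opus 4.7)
The plan is to set $u(x) = x^{-1}\cdot f(x)$ and express $Qu$ explicitly in terms of $B = D_h f(x) - I$, then estimate its two blocks separately, using the closeness of $A = D_h f(x)$ to an orthogonal matrix for the symmetric block and using the conformal-symplectic structure of $D_h f$ for the symplectic block. Because the Heisenberg group law alters only the vertical coordinate, the horizontal components of $u$ are simply $u_j(x) = f_j(x) - x_j$ for $j = 1,\dots,2n$; since $X_i x_j = \delta_{ij}$ on horizontal indices, we obtain $D_h u(x) = A - I = B$. Using $J^2 = -I$ this yields $(A-I) + J(A-I)J = A + JAJ$, so
\[
Qu(x) = \frac{1}{2}\begin{pmatrix} B + B^t \\ A + JAJ \end{pmatrix}.
\]

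\emph{Symmetric block.} From $A^tA = I + B + B^t + B^tB$, rearranging gives $B + B^t = (A^tA - I) - B^tB$. The quasi-isometry condition forces the singular values $\sigma_j$ of $A$ to lie in $[L^{-1}, L]$, whence $|A^tA - I| \le L^2 - 1$ and
\[
\tfrac{1}{2}|B + B^t| \le \tfrac{1}{2}\bigl((L^2 - 1) + |B|^2\bigr).
\]

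\emph{Symplectic block.} The KR-orientation preservation together with the fact that $Df$ is a morphism of graded Lie algebras forces $D_h f$ to be conformal-symplectic with positive factor $\lambda = \lambda(x,f)$: $A^t J A = \lambda J$. Multiplying on the left by $A^{-t}$ and then on the right by $J$ gives $JAJ = -\lambda A^{-t}$, hence $A + JAJ = A - \lambda A^{-t}$. A direct multiplication then yields the crucial identity
\[
(A + JAJ)(A + JAJ)^t = AA^t - 2\lambda I + \lambda^2(AA^t)^{-1} = \bigl(T^{1/2} - \lambda T^{-1/2}\bigr)^2,
\]
where $T = AA^t$. Therefore the singular values of $A + JAJ$ are exactly $|\sigma_j - \lambda/\sigma_j|$. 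The conformal-symplectic relation also forces the $\sigma_j$ to pair as $(\sigma, \lambda/\sigma)$ with both members of each pair in $[L^{-1}, L]$, so $|\sigma_j - \lambda/\sigma_j| \le L - L^{-1} \le L^2 - 1$, giving $\tfrac{1}{2}|A + JAJ| \le \tfrac{1}{2}(L^2 - 1)$.

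Summing the two block estimates and then discarding the nonnegative quantity $\tfrac{L^2-1}{2}|B|$ produces
\[
|Qu(x)| \le (L^2 - 1) + \tfrac{1}{2}|B|^2 \le \tfrac{L^2-1}{2}(|B| + 2) + \tfrac{1}{2}|B|^2,
\]
which is the claimed inequality. The main difficulty is the symplectic block: the naive estimate $|A - \lambda A^{-t}| \le \|A^{-t}\|\cdot|A^tA - \lambda I|$ contributes a spurious factor $\|A^{-t}\|\sim L$ and only yields an order $L(L^2-1)$ bound. Recognizing that $(A + JAJ)(A+JAJ)^t$ reduces to the perfect square $(T^{1/2} - \lambda T^{-1/2})^2$, whose eigenvalues can be read off directly from the paired singular values of $A$, is what produces the sharp linear-in-$(L^2-1)$ constant.
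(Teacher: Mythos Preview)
Your argument is correct and the symmetric block is handled exactly as in the paper. The symplectic block is where you diverge: the paper simply invokes the Beltrami system of Kor\'anyi--Reimann, which for a $K$-quasiregular contact map with $K=L^{2}$ gives $|\overline{Z}f|\le\frac{K-1}{K+1}|Zf|\le\frac{L^{2}-1}{2}(|D_{h}f-I|+1)$, and this is the bound that produces the term $\frac{L^{2}-1}{2}(|B|+2)$ directly. You instead exploit the conformal--symplectic relation $A^{t}JA=\lambda J$ by hand: from $JAJ=-\lambda A^{-t}$ you obtain $(A+JAJ)(A+JAJ)^{t}=(T^{1/2}-\lambda T^{-1/2})^{2}$, read off the singular values $|\sigma_{j}-\lambda/\sigma_{j}|$, and use the pairing $\sigma\leftrightarrow\lambda/\sigma$ of singular values (which follows from $J^{-1}(A^{t}A)^{-1}J=\lambda^{-2}A^{t}A$, a point you might make explicit) to get the sharper constant $\tfrac{1}{2}(L-L^{-1})\le\tfrac{1}{2}(L^{2}-1)$, which you then relax to match the stated inequality. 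Your route is more elementary and fully self-contained, at the cost of a short linear-algebra computation; the paper's route is a one-line citation but imports the Beltrami machinery. Both yield the lemma, and your intermediate bound on the symplectic block is in fact slightly stronger than the paper's.
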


\begin{proof}
Put $x^{-1}\cdot f(x)=u(x)$.
Then $D_h f(x)=D_h u(x)+I$ for almost all
$x\in U$.
We have
$$
(D_h f(x))^t D_h f(x)=I+(D_h u(x))^t+D_h u(x)+
(D_h u(x))^t D_h u(x).
$$
Hence,
$$2 Q_1 u(x)=(D_h f(x))^t D_h f(x)-I-(D_h u(x))^t D_h u(x),$$
where
$Q_1 u(x)=\frac{1}{2}((D_h u(x))^t+D_h u(x))$
is a~first-order differential operator with constant
coefficients.
The relation $|(D_h f(x))^t D_h f(x)-I|\leqslant L^2-1$ yiels
$$|Q_1 u(x)|\leqslant \frac{L^2-1}{2}+\frac{1}{2}|D_h u(x)|^2.$$

It is easy to verify that
$|\overline{Z}f|=|\frac{1}{2}(D_h f +J D_h f J)|$
and $|Zf|=|\frac{1}{2}(D_h f -J D_h f J)|\leqslant |D_hf|$.
Since $f$ preserves $KR$-orientation and is a~mapping with bounded distortion,
the Beltrami system \cite[Theorem
C]{rei} implies that
$$
|\overline{Z}u|=
|\overline{Z}f|\leqslant \frac{K-1}{K+1} |Zf|\leqslant
\frac{K-1}{K+1} (|D_h f-I|+1)\leqslant \frac{L^2-1}{2} (|D_h u|+1).
$$

It remains to observe that
$|Qu|\leqslant |Q_1 u|+|\frac{1}{2}(D_h u +J D_h u J)|$.
\end{proof}

\subsection{The kernel of the operator $Q$}

To prove the main results of this paper, we apply the coercive estimates
for~$Q$ in~\eqref{eq:main}. On general Carnot groups,
Isangulova and Vodopyanov established coercive estimates
for homogeneous differential operators with constant coefficients
and finite-dimensional kernels \cite{isan-vod}. On Heisenberg groups, Romanovski{\u\i} obtained this result earlier in \cite{rom,rom1}. To apply the coercive estimates,
we only have to show that the kernel of $Q$ is finite-dimensional.

\begin{lemma}
\label{lem:kernel_Q}
The kernel of the operator $Q$ on the Sobolev class
$W_{p,\loc}^1(\mathbb{H}^n,\mathbb{C}^{n})$,
$p> 1$,
is finite-dimensional$:$
$u\in \kernel Q$ if and only if
\begin{align*}
u(z,t)&=a+Kz,
\quad \text{where }
a \in \mathbb{C}^{n} \ \text{and}\ K+K^*=0 \qquad \text{for }n>1;\\
u(z,t)&=a+ikz+tb+iz^2\overline{b}+i|z|^2b,
\quad \text{where }
a,b \in \mathbb{C},\ k\in \mathbb{R} \qquad \text{for }n=1;
\end{align*}
\end{lemma}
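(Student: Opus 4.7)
The statement is an equivalence.

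For \emph{sufficiency}, substitute each candidate into $Qu$ using $Z_j=\partial_{z_j}+i\bar z_j\partial_t$ and $\overline Z_j=\partial_{\bar z_j}-iz_j\partial_t$, and verify by direct computation that $\overline Zu=0$ and $Zu+(Zu)^{\ast}=0$; the condition $K+K^{\ast}=0$ (for $n>1$) and the specific imaginary coefficients $ib$, $i\bar b$ (for $n=1$) are precisely what these two identities force.

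For \emph{necessity}, assume $u\in\kernel Q$. Since $Q$ has constant coefficients, convolution with standard mollifiers on $\mathbb H^n$ reduces to the smooth case, so one may take $u\in C^{\infty}$ satisfying
\begin{equation*}
\overline Z_lu_k=0,\qquad Z_ju_k=-\overline{Z_ku_j}\qquad(1\le j,k,l\le n).
\end{equation*}
Applying $\overline Z_l$ to the antihermiticity relation, using the canonical commutator $[\overline Z_l,Z_j]=2i\delta_{lj}T$, and then conjugating, one obtains the second-order identity $Z_lZ_ku_j=2i\delta_{lj}T\bar u_k$. Commutativity of the $Z_i$'s forces the right-hand side to be symmetric in $(l,k)$, giving $\delta_{lj}T\bar u_k=\delta_{kj}T\bar u_l$.

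\emph{Case $n>1$.} Choose $l=j$ and $k\ne j$ (possible since $n\ge 2$) to obtain $Tu_k\equiv 0$ for every $k$. Then $\overline Z_lu_k=0$ collapses to $\partial_{\bar z_l}u_k=0$, and $Z_lZ_ku_j=0$ forces each $u_j$ to be affine in $z$: $u_j(z)=a_j+\sum_lK_{jl}z_l$. Substituting into $Z_ju_k+\overline{Z_ku_j}=0$ yields $K_{kj}+\overline{K_{jk}}=0$, i.e.\ $K+K^{\ast}=0$.

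\emph{Case $n=1$.} The symmetry identity above is vacuous; one must iterate. Applying $\overline Z$ to $Zu+\overline{Zu}=0$ and using $\overline ZZ=Z\overline Z+2iT$ gives $Z^2u=2iT\bar u$; applying $\overline Z$ once more produces $TZu=0$. Together with $\overline ZTu=T\overline Zu=0$, the function $Tu$ is killed by both $Z$ and $\overline Z$, hence also by $[Z,\overline Z]=-2iT$, so $T^2u=0$. Thus $Tu$ is annihilated by $\{Z,\overline Z,T\}$, which span the tangent space, and $Tu=b$ is constant. Writing $u(z,t)=bt+g(z,\bar z)$, the CR equation $\overline Zu=0$ becomes $\partial_{\bar z}g=ibz$, so $g=ib|z|^2+h(z)$ with $h$ entire. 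Finally $\Re(Zu)\equiv 0$ reduces to the harmonic identity $\Re\bigl(2ib\bar z+h'(z)\bigr)\equiv 0$, which forces $h'(z)=ik+2i\bar b z$ for some $k\in\mathbb R$. Integrating and absorbing constants reproduces the announced formula. The principal obstacle is this case $n=1$: the one-commutator identity that kills $Tu$ when $n\ge 2$ is not available, and one has to iterate to the second-order identity $TZu=0$, which in turn requires the full antihermitian condition $\Re(Zu)\equiv 0$. Once $u$ is known to be polynomial, finite dimensionality and the explicit basis follow by direct bookkeeping.
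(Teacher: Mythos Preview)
Your argument is correct, and it is organized rather differently from the paper's. For $n>1$ you derive the single second-order identity $Z_lZ_ku_j=2i\delta_{lj}T\bar u_k$ and exploit its symmetry in $(l,k)$ to kill $Tu_k$ in one stroke; the paper instead computes $Z_mTu_k$ case by case, which forces it to split the analysis into $n>2$ (where $Tu$ is first shown to be a constant vector $\lambda$ and then $\lambda=0$ is extracted by matching polynomial coefficients) and a separate, more laborious $n=2$ case. Your route is shorter and treats all $n\ge 2$ uniformly. For $n=1$ the paper abandons complex notation and works with the real fields $X,Y$ and the auxiliary function $\varphi=Yu_1=-Xu_2$, proving $T\varphi$ is constant via several commutator identities; your iteration $Z^2u=2iT\bar u\Rightarrow TZu=0\Rightarrow T^2u=0\Rightarrow Tu=b$ is the complex-variable counterpart of the same mechanism and yields the same answer with less bookkeeping. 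Finally, for the passage from smooth to Sobolev solutions you invoke group mollification (which commutes with the left-invariant operator $Q$) and the closedness of a finite-dimensional subspace, whereas the paper appeals to the coercive estimate of \cite{isan-vod} to force $u=Pu$; your reduction is more elementary and avoids importing that external result at this stage.
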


\begin{proof}
\noindent {\sc (i)} Take a~$C^\infty$-function
$u\colon \mathbb{H}^n \to \mathbb{C}^{n}$ in the kernel of
$Q$.
In complex notation, $u \in \kernel Q$ if and only if
$$
Zu=-(Zu)^*,
\quad
\overline{Z}u=Z\overline{u}=0.
$$

If $u$ is independent of $t=x_{2n+1}$ then
it is easy to see that
$$
u(z,t)=a+Kz, \quad
\text{where }
z\in \mathbb{C}^n, t\in\mathbb{C},
a\in \mathbb{C}^n, \text{ and }K+K^*=0.
$$

Suppose that $u$ depends on $t=x_{2n+1}$.
We have
\begin{equation*}
-2i Z_m Tu_k=  Z_m (Z_k\overline{Z}_k
-\overline{Z}_k Z_k) u_k
=-Z_m\overline{Z}_k Z_k u_k
=Z_m \overline{Z}_k \overline{Z}_k \overline{u}_k
=\overline{Z}_k \overline{Z}_k Z_m \overline{u}_k= 0
\end{equation*}
for all $m\neq k$. If $m=k$ then
$$
-2i Z_k Tu_k=  Z_k (Z_j\overline{Z}_j
-\overline{Z}_j Z_j) u_k
=-Z_k\overline{Z}_j Z_j u_k
=Z_k \overline{Z}_j \overline{Z}_k \overline{u}_j =
-2i \overline{Z}_j T\overline{u}_j=2i Z_j T u_j,
$$ where $j\neq k$.
Thus, $Z_k Tu_k=0$ for all $k=1,\ldots, n$ provided that
$n>2$.

\noindent {\sc (ii)}
Consider the case $n>2$. We have
$Tu=\lambda$ with
$\lambda\in \mathbb{C}^n$.
Verify that $\lambda=0$.
We have
$$
u=a+Kz+\lambda (t+i|z|^2)+P(z)
$$
with $a,\lambda\in \mathbb{C}^n$ and $K+K^*=0$.
Here $P=(P_1,P_2,\ldots,P_n)\colon \mathbb{C}^n\to \mathbb{C}^n$, where $P_k(z)=\sum_{l,s=1}^n p_{ls}^k z_lz_s$, $k=1,\ldots,n$,
are polynomials of degree 2 depending only on $z$,
$p^k_{ls}=p^k_{sl}$.
Here we consider the function $t+i |z|^2$ since
its differential  along
$\overline{Z}_k$ vanishes
for all $k=1,\ldots,n$.

Hence,
$$Z_k u_l=K_{lk}+2i\overline{z}_k \lambda_l+
\sum_{j=1}^n (p^l_{kj}z_j+p^l_{jk}z_j)=
K_{lk}+2i\overline{z}_k \lambda_l+
2\sum_{j=1}^n p^l_{kj}z_j,
$$
$$
\overline{Z_l u}_k=
\overline{K}_{kl}
-2i z_l \overline{\lambda}_k
+2\sum_{j=1}^n \overline{p}^k_{lj}\overline{z}_j.
$$
The coefficients of $\overline{z}_k$
and $z_l$ in the equation $Z_k u_l+\overline{Z_l u_k}=0$
are
$$
2i\lambda_l+2\overline{p}^k_{lk}=0,\quad
-2i \overline{\lambda}_k+2p^l_{kl}=0,\quad p^k_{ls}=0
\text{ for all } s\neq k.
$$
Since $p^k_{ls}=p^k_{sl}$, we infer that $P_k=0$ and $\lambda_k=0$
for all $k$.

\noindent \textsc{(iii)} Consider the case $n=2$.
We have
$$
Z_2Tu_1=Z_1Tu_2=0 \quad \text{and} \quad
Z_1 T u_1=-\overline{Z}_1 T\overline{u}_1=-Z_2Tu_2
=\overline{Z}_2T\overline{u}_2=\mu.
$$
The following relations show that $\mu$ is a~constant:
\begin{gather*}
Z_1 \mu=Z_1 \overline{Z}_2T\overline{u}_2=0,\qquad
\overline{Z}_1 \mu =-\overline{Z}_1Z_2Tu_2=0,\\
Z_2 \mu=-Z_2\overline{Z}_1 T\overline{u}_1=0,\qquad
\overline{Z}_2\mu=\overline{Z}_2Z_1Tu_1=0.
\end{gather*}
Hence, $Tu_1=\lambda_1+\mu z_1$ and $T u_2=\lambda_2-\mu z_2$.
Thus,
$$
u_1=(t+i|z|^2)(\lambda_1+\mu z_1)+P_1(z),\quad
u_2=(t+i|z|^2)(\lambda_2-\mu z_2)+P_2(z).
$$
Here we write down $u$ up to the known term $a+Kz$ and
$P_k=a_k z_1^2+b_k z_1z_2+c_kz_2^2$, $k=1,2$, are
polynomials of degree $2$ depending only on $z_1,z_2$.

It follows that
\begin{multline*}
0=Z_1 u_1+\overline{Z_1 u_1}=2i\overline{z}_1(\lambda_1+\mu z_1)+
(t+i|z|^2)\mu+2a_1z_1+b_1 z_2
\\
-2iz_1(\overline{\lambda}_1+
\overline{\mu} \overline{z}_1)+
(t-i|z|^2)\overline{\mu}+2\overline{a}_1\overline{z}_1+
\overline{b}_1 \overline{z}_2.
\end{multline*}
The coefficients of $|z|^2$ and $t$ are equal to
$i\mu-i\overline{\mu}$ and $\mu+\overline{\mu}$
respectively.
Thus, $\mu=0$.
Clearly, $b_1=0$ and $a_1=i\overline{\lambda}_1$.
Similarly, $b_2=0$ and $c_2=i\overline{\lambda}_1$.
The equality
$$
Z_2u_1=2i\overline{z}_2\lambda_1+2c_1z_2=
-\overline{Z_1u_2}
=-2i z_1\overline{\lambda}_2-2\overline{a}_2\overline{z}_1
$$
implies that $\lambda_1=\lambda_2=P_1=P_2=0$.

\noindent {\sc (iv)} Consider the case $n=1$.
A mapping $u=(u_1,u_2)\colon \mathbb{H}^1\to \mathbb{R}^2$
belongs to $\kernel Q$ if and only if
$$
X u_1=0, \quad Yu_2=0,\quad Y u_1+Xu_2=0.
$$
Put $\varphi=Y u_1=-Xu_2$. It satisfies
\begin{align*}
&X^2 \varphi=X^2 Yu_1=XXYu_1-XYXu_1=-4XTu_1=-4TXu_1=0,
\\
&Y^2 \varphi=-Y^2 Xu_2=YXYu_2-YYXu_2=-4YTu_2=-4TYu_2=0,
\\
&YX\varphi+XY\varphi=YXYu_1-XYXu_2
\\
& \qquad=Y(-4T+YX)u_1
+X(-4T-XY)u_2
 =-4T(Yu_1+Xu_2)=0.
\end{align*}
Verify that $T\varphi\equiv \mathrm{const}$.
We have
$$
-4XT\varphi=X(XY\varphi-YX\varphi)=-2XYX\varphi=(XY-YX)X\varphi=
XYX\varphi
$$
and
$$
-4YT\varphi=Y(XY\varphi-YX\varphi)=2YXY\varphi=
(XY-YX)Y\varphi=-YXY\varphi.
$$
Hence, $XYX\varphi=YXY\varphi=0$ and $XT\varphi=YT\varphi=0$.
Thus, $T\varphi=\lambda\in \mathbb{R}$
and $\varphi=\lambda t+\psi(x,y)$. Since
$X^2\varphi=\frac{\partial^2\psi}{\partial x^2}=0$,
$Y^2 \varphi=\frac{\partial^2\psi}{\partial y^2}=0$, and
$XY\varphi+YX\varphi=2\frac{\partial^2\psi}{\partial y\partial x}=0$,
we conclude that $\psi$ is a linear function of $x$ and $y$.

Thus, $\varphi=\alpha+\lambda t+\mu x+\nu y$.
It remains to calculate $u_1$ and $u_2$. The systems
$$
\begin{cases}
Xu_1=0,\\
Yu_1=\varphi,\\
\end{cases}
\quad
\begin{cases}
Xu_2=-\varphi,\\
Yu_1=0\\
\end{cases}
$$
yield
$$
u_1=c_1+\alpha y-\frac{\mu}{4}t+\frac{\mu xy+\nu y^2}{2},\quad
u_2=c_2-\alpha x-\frac{\nu}{4}t-\frac{\nu xy+\mu x^2}{2}.
$$

\noindent {\sc (v)} Consider a~mapping $u$ of Sobolev class
$W_{p,\loc}^1(\mathbb{H}^n,\mathbb{R}^{2n})$
satisfying $Qu=0$ in the sense of distributions.
We show that $u\in \kernel Q$, where $\kernel Q$ is the finite-dimensional space
found in the smooth case.
Consider a~ball $B$ in $\mathbb{H}^n$
and construct a~sequence
$u_k\in C^\infty(\mathbb{H}^n,\mathbb{R}^{2n})$
such that $\|u-u_k\|_{W_{p}^1(B)}\to 0$ as $k\to \infty$.
We showed above that the kernel of $Q$ is finite-dimensional on smooth mappings.
Hence, by Theorem~1 of~\cite{isan-vod}, there exists a~projection $P$
onto $\kernel Q$
such that
$$
\|u_k-Pu_k\|_{W_{p}^1(B)}\leqslant C \|Qu_k\|_{p,B}.
$$
Passing to the limit as $k\to \infty$,
we infer that $\|u-Pu\|_{W_{p}^1(B)}\leqslant C \|Qu\|_{p,B}=0$,
where $Pu=\lim_{k\to \infty} Pu_k$. Since $Pu_k\in \kernel Q$, it follows that
$Pu$ also belongs to $\kernel Q$. Finally, $u=Pu\in \kernel Q$.
\end{proof}

\subsection{Projection onto the Kernel of the Operator $Q$}

In this subsection, we construct a~projection onto $\kernel Q$
convenient for further calculations.

Put
$$\mathrm{Box}(a,r)=\{ay\in \mathbb{H}^n\ :\  y=(y_1,\ldots,y_{2n+1}),
|y_i|< r,\ i=1,\ldots,2n,\ |y_{2n+1}|<r^2\}.$$
It is easy to verify that
$$\mathrm{Box}(a,\varkappa r)\subset B(a,r)\subset \mathrm{Box}(a,r),
\quad
\text{where } \varkappa=(4n^2+1)^{-1/4},$$
$$
|\mathrm{Box}(a,r)|=2^{2n+1}r^\nu,
\qquad
\int_{\mathrm{Box}(0,r)} |z_i|^2\, dx=
\frac{2^\nu r^{\nu+2}}{3}
\quad \text{for all}\quad i=1,\ldots,n.
$$

By \cite{isan-vod},
we have the following result:
\textit{given a~ball $B\subset \mathbb{H}^n$, $n>1$, and $p>1$
there is a~projection $\Pi$ from $W_p^1(B, \mathbb{R}^{2n})$ onto the kernel of $Q$
such that}
$$\|f-\Pi f\|_{W_{p}^1(B)}\leqslant C \|Qf\|_{p,B}
\quad
\text{\it for every } f\in W_p^1(B, \mathbb{R}^{2n}).
$$
By analogy with Theorem 3.2 of Chapter~3 of \cite{resh},
we can show that the coercive estimates
hold for every projection onto the kernel of $Q$.

\begin{proposition}[\mbox{\cite[Proposition 2]{isan-smj2}}]
Consider a~ball~$B$  on the Heisenberg group
$ \Bbb{H}^n$, $n>1$, $p>1$, and a~projection~$P$ from
$W_p^1(B,\Bbb{R}^{2n})$ onto $\operatorname{ker} (Q)$.
Then there is a~constant $C>0$ such that
$$
\|u-P u\|_{W_p^1(B) }\le C\|Qu\|_{p,B }
$$
for every $u\in W_p^1(B ,\Bbb{R}^{2n})$.
\end{proposition}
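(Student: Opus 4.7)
The plan is to reduce the coercive estimate for an arbitrary projection $P$ onto $\kernel Q$ to the coercive estimate for the specific projection $\Pi$ from \cite{isan-vod} recalled just before the proposition. Following Reshetnyak's strategy in the Euclidean case (Theorem~3.2, Ch.~3 of \cite{resh}), the reduction will use only the finite-dimensionality of $\kernel Q$ established in Lemma~\ref{lem:kernel_Q} together with the projection property $P^2=P$; no additional analytic input beyond the coercive estimate for $\Pi$ is needed.

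The central observation is the identity
$$u - Pu = (I-P)(u - \Pi u).$$
Indeed, $\Pi u \in \kernel Q$ and $P$ acts as the identity on $\kernel Q$, so $P\Pi u = \Pi u$, which gives $\Pi u - Pu = P\Pi u - Pu = -P(u - \Pi u)$; adding $u-\Pi u$ to both sides yields the identity.

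Taking $W_p^1(B)$-norms, using the triangle inequality, and invoking the boundedness of $P$ as a linear operator on $W_p^1(B, \mathbb{R}^{2n})$, I will obtain
$$\|u - Pu\|_{W_p^1(B)} \leqslant (1+\|P\|)\,\|u - \Pi u\|_{W_p^1(B)} \leqslant (1+\|P\|)\,C_0\,\|Qu\|_{p,B},$$
where the second inequality is precisely the known estimate for $\Pi$. This gives the claim with $C = (1+\|P\|)C_0$.

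The one point requiring care is the continuity of $P$ on $W_p^1(B, \mathbb{R}^{2n})$. Since $\kernel Q$ is finite-dimensional, any projection onto it decomposes as $Pu = \sum_{j=1}^{d}\ell_j(u)\,e_j$ for a basis $\{e_j\}$ of $\kernel Q$ and linear functionals $\ell_j$ with $\ell_j(e_k)=\delta_{jk}$, and boundedness of $P$ reduces to boundedness of each $\ell_j$. This is built into the standard Banach-space meaning of the word \emph{projection} and is implicit in the hypothesis of the proposition. Once it is granted, everything else is an application of the identity above and the prior coercive estimate for $\Pi$, so there is no serious analytic obstacle.
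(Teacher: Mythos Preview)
Your argument is correct and is precisely the approach the paper indicates: the paper does not give a self-contained proof here but cites \cite[Proposition~2]{isan-smj2} and remarks that the claim follows ``by analogy with Theorem~3.2 of Chapter~3 of \cite{resh}'', which is exactly the Reshetnyak reduction you carry out via the identity $u-Pu=(I-P)(u-\Pi u)$ and the boundedness of $P$. Your handling of the one delicate point---that a \emph{projection} in the Banach-space sense is by definition bounded, so $\|I-P\|\leqslant 1+\|P\|<\infty$---is appropriate and matches the implicit hypothesis.
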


We construct a~projection $P$ from
$W_p^1(B(0,\frac{3}{10}),\mathbb{C}^{n})$ for $B(0,\frac{3}{10})\subset \mathbb{H}^n$ with
$n>1$ and $p>1$,
onto the kernel of $Q$.

Consider the complex-valued $n\times n$ matrix $A(u)$,
$$
[A(u)]_{ij}=\frac{2^{\nu+4} 3}{\varkappa^{\nu+2}}
\int_{\mathrm{Box}(0,\frac{\varkappa}{4})} u_i(x) \overline{z}_j \, dx,
\quad
i,j=1,\ldots,n;
$$
and the vector $a(u)\in \mathbb{C}^n$,
$$
[a(u)]_i=\frac{2^{\nu+1}}{\varkappa^\nu}
\int_{\mathrm{Box}(0,\frac{\varkappa}{4})} u_i(x) \, dx,
\quad
i=1,\ldots,n.
$$

The following properties are obvious:

(1) if
$u\equiv \mathrm{const}$ then $a(u)=u$
and $A(u)=0$;

(2) if $u(z,t)\equiv z$ then $a(u)=0$
and $A(u)=I$;

(3) if $B\in U(n)$ then $a(B u)=Ba(u)$
and $A(B  u)=B A(u)$.

\begin{definition}
Define the projection $P$ onto the kernel of $Q$ as
$$Pu=K(u)z+a(u)$$
for
$u\in W_p^1(B(0,\frac{3}{10}),\mathbb{C}^{n})$,
where $K(u)=\frac{A(u)-[A(u)]^*}{2}$ is a~skew-Hermitian $n\times n$ matrix.
\end{definition}

\begin{lemma}\label{lem:P=0}
Suppose that $\varepsilon<\sqrt{\frac{2}{n}}
\bigl(\frac{2}{\varkappa}\bigr)^{n+1}$. If $u\in W_2^1(B(0,\frac{3}{10}),\mathbb{C}^n)$
satisfies $|u(x)-z|\leqslant \varepsilon$
for all $x=(z,t)\in B(0,\frac{3}{10})$
then there is a~unitary $n\times n$ matrix $V\in U(n)$ such that
$|V-I|< \frac{n \varkappa^{n+1}}{2^n}\varepsilon$
and $P(Vu)\equiv \mathrm{const}$.
\end{lemma}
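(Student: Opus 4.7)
The plan is to reformulate $P(Vu)\equiv\mathrm{const}$ as a Hermiticity condition on the matrix $VA(u)$, and then solve that condition via polar decomposition. By property~(3) and the linearity of $a$, $A(Vu)=VA(u)$ and $a(Vu)=Va(u)$, so $P(Vu) = \tfrac{1}{2}(VA(u)-(VA(u))^*)z + Va(u)$. Thus $P(Vu)$ is constant in $z$ exactly when $VA(u)$ is Hermitian; this is the equation I would solve for $V$.

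Set $v(x):=u(x)-z$, so that $|v|\leqslant\varepsilon$ on $\mathrm{Box}(0,\varkappa/4)\subset B(0,3/10)$. By property~(2) (the fact that $A$ sends the function $x\mapsto z$ to the identity matrix) and linearity of $A$, we obtain $A(u)=I+E$ with $E:=A(v)$. Applying Cauchy--Schwarz to the entries $[A(v)]_{ij} = \tfrac{2^{\nu+4}\cdot 3}{\varkappa^{\nu+2}}\int v_i\overline{z}_j\,dx$, together with the explicit values $|\mathrm{Box}(0,\varkappa/4)|=2^{2n+1}(\varkappa/4)^\nu$ and $\int|z_j|^2\,dx=\tfrac{2^\nu(\varkappa/4)^{\nu+2}}{3}$, yields an explicit bound on $|E|$ in terms of $\varepsilon$; the smallness assumption $\varepsilon<\sqrt{2/n}(2/\varkappa)^{n+1}$ is designed so that $A(u)=I+E$ is invertible and its polar decomposition is well-controlled. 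I then define $V$ from the polar decomposition $A(u)=WH$ (with $W\in U(n)$ and $H$ positive Hermitian) by setting $V:=W^*$. Then $VA(u)=W^*WH=H$ is Hermitian, so $K(Vu)=0$ and $P(Vu)=a(Vu)=Va(u)$ is a constant in $\mathbb{C}^n$, as required.

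For the quantitative bound on $|V-I|$, expand $H=(I+E+E^*+E^*E)^{1/2}$ by the binomial series to obtain $H=I+\tfrac{1}{2}(E+E^*)+O(|E|^2)$, whence $W=(I+E)H^{-1} = I + \tfrac{1}{2}(E-E^*)+O(|E|^2)$ and $V-I = \tfrac{1}{2}(E^*-E)+O(|E|^2)$ is skew-Hermitian to leading order, giving $|V-I|\leqslant\tfrac{1}{2}|E-E^*|+O(|E|^2)$. The main technical obstacle is matching the precise constant $\tfrac{n\varkappa^{n+1}}{2^n}$ in the claimed bound: one needs to estimate the anti-Hermitian part $\tfrac{1}{2}|E-E^*|$ directly from the integrals $\tfrac{C}{2}\int (v_i\overline{z}_j-\overline{v}_j z_i)\,dx$, exploiting whatever cancellations this specific form permits rather than using the coarser operator-norm bound on $|E|$, and then check that the smallness hypothesis on $\varepsilon$ is strong enough for the linear term to dominate the quadratic remainder from the polar-decomposition expansion.
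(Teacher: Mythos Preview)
Your approach is essentially identical to the paper's: the paper also constructs $V$ via the polar decomposition of $A=A(u)$, presented through explicit diagonalization of $A^*A$ (one finds orthonormal bases $\{w_i\}$ and $\{v_i=\lambda_i^{-1}Aw_i\}$ with $\lambda_i$ the singular values of $A$, and defines $V$ by $Vv_i=w_i$; this is exactly $V=W^*$ in your notation, since then $VA=H$).

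The only difference is in estimating $|V-I|$. The paper does not use the binomial series; it works directly with eigenvectors: from $|Aw_i-w_i|\leqslant|A-I|$ one gets $|\lambda_i-1|\leqslant|A-I|$ and $|v_i-w_i|\leqslant 2|A-I|$, whence $|V-I|\leqslant\sqrt{n}\max_i|v_i-w_i|\leqslant 2\sqrt{n}\,|A-I|\leqslant\sqrt{2}\,n\bigl(\tfrac{\varkappa}{2}\bigr)^{n+1}\varepsilon$, which already undershoots the target $\tfrac{n\varkappa^{n+1}}{2^n}\varepsilon=2n\bigl(\tfrac{\varkappa}{2}\bigr)^{n+1}\varepsilon$. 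So your worry about needing extra cancellations in the anti-Hermitian part is unfounded: the crude operator-norm bound on $E=A-I$ suffices. This eigenvector route also sidesteps a convergence issue in your expansion: the hypothesis guarantees only $|E|<1$, so $|E+E^*+E^*E|$ can exceed $1$ and the binomial series for $(I+X)^{1/2}$ need not converge in operator norm; the spectral argument works for all invertible $A$.
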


\begin{proof}
Put $A=A(u)$.
Given a~vector $\xi\in \mathbb{C}^n$, we have
\begin{multline*}
\bigl|[A-I]\xi\bigr|^2=
\bigl|[A(u)-A(z)]\xi\bigr|^2=
\frac{2^{\nu+4}3}{\varkappa^{\nu+2}}
\sum_{i=1}^n\,
\biggl|
\int\limits_{\mathrm{Box}(0,\frac{\varkappa}{4})}
\sum_{j=1}^n
(u_i(x)-z_i) \, \overline{z}_j \, \xi_j \, dx
\biggr|^2
\\
\leqslant
\frac{2^{\nu+4}3}{\varkappa^{\nu+2}}
\int\limits_{\mathrm{Box}(0,\frac{\varkappa}{4})}
|u(x)-z|^2\,dx
\int\limits_{\mathrm{Box}(0,\frac{\varkappa}{4})}
|\langle \xi,z \rangle |^2
\leqslant
\frac{n \varkappa^\nu}{2^{\nu+1}}\varepsilon^2|\xi|^2.
\end{multline*}
Hence, $|A-I|\leqslant \sqrt{\frac{n}{2}}
\bigl(\frac{\varkappa}{2}\bigr)^{n+1} \varepsilon$
and $A$ is a~nondegenerate complex
$n\times n$ matrix if
$\varepsilon<\sqrt{\frac{2}{n}} \bigl(\frac{2}{\varkappa}\bigr)^{n+1}$.

For the positive definite Hermitian
$n\times n$ matrix $A^* A$, there exists a~unitary matrix
$U\in U(n)$ such that $UA^*AU^*$ is a~real diagonal matrix
$\operatorname{diag}\{\mu_1,\ldots,\mu_n\}$, $\mu_i>0$ (for instance, see
\cite{malcev}).
Hence, there are two orthonormal bases
$\{w_i=U^* e_i\}_{i=1,\ldots,n}$
and $\{v_i=\frac{1}{\lambda_i} Aw_i\}_{i=1,\ldots,n}$, where
$\lambda_i=\sqrt{\mu_i}>0$ and $Aw_i=\lambda_i v_i$ for $i=1,\ldots,n$.
Here $\{e_i=(0,\dots,0,\overset{i}{1},0,\dots,0)\}_{i=1,\dots,n}$
is the standard basis of $\mathbb{C}^n$.

Consider  the unitary matrix $V\in U(n)$ with
$Vv_i=w_i$ for $i=1,\ldots,n$.
Since $VA w_i=V(\lambda_i v_i)=\lambda_i w_i$,
the matrix $VA$ is diagonal in the basis $\{w_1,\ldots,w_n\}$, and
hence, Hermitian in the origin basis $\{e_i\}_{i=1,\dots,n}$.
Therefore,
$A(V u)=V A(u)$ is a~Hermitian matrix, and consequently,
$K(V u)=0$.
Thus, we have demonstrated that
$P(Vu)=a(Vu)\equiv \mathrm{const}$.

Estimate $|V-I|$.
Since
$|Aw_i-w_i|=|\lambda_iv_i-w_i|\leqslant \sqrt{\frac{n}{2}}
\bigl(\frac{\varkappa}{2}\bigr)^{n+1} \varepsilon$
for all $i=1,\ldots,n$, we obtain
$$
|\lambda_i-1|=\bigl||Aw_i|-|w_i|\bigr|\leqslant |Aw_i-w_i|
\leqslant \sqrt{\frac{n}{2}}
\biggl(\frac{\varkappa}{2}\biggr)^{n+1} \varepsilon
$$
and
$$
|v_i-w_i|\leqslant |\lambda v_i-w_i|+|\lambda_i v_i-v_i|\leqslant
\sqrt{2n}
\biggl(\frac{\varkappa}{2}\biggr)^{n+1} \varepsilon.
$$
Given a~vector $\xi=\sum_{i=1}^n \xi_i v_i\in \mathbb{C}^n$,
we have
$$
\bigr|(V-I)\xi\bigl|
=
\biggl|\sum_{i=1}^n \xi_i w_i-\xi_i v_i\biggr|
\leqslant
|\xi|\sqrt{\sum_{i=1}^n |w_i-v_i|^2}
\leqslant
\sqrt{2}|\xi|^2  n^2 \biggl(\frac{\varkappa}{2}\biggr)^{n+1} \varepsilon.$$
Hence,
$|V-I|< \frac{n \varkappa^{n+1}}{2^n}\varepsilon$.
\end{proof}

\section{Local Geometric Rigidity}

\subsection{Qualitative local rigidity}

\begin{lemma}
\label{th:Local_Sob_stab}
For every
$q\in (0,1)$,
there exist nondecreasing functions
$\mu_i(\cdot,q)\colon [0,\infty) \to [0,\infty)$,
$i=1,2$,
such that

{\rm(1)} $\mu_i(t,q)\to 0$ as $t\to 0$,
$i=1,2${\rm;}

{\rm(2)} for each mapping
$f$ of class $I(1+\varepsilon,B(0,1))$, where $B(0,1)\subset \mathbb{H}^n$,
there exists an~isometry~$\theta$
satisfying
\begin{align*}
\rho( f(x),\theta(x))
&\leqslant q\,\mu_1(\varepsilon ,q)
\quad
\text{for all }x\in B(0,q),
\\
\|D_h f(x)-
D_h\theta(x)\|_{2,B(0,q)}
&\leqslant
|B(0,q)|^{1/2} \mu_2 (\varepsilon,q).
\end{align*}
\end{lemma}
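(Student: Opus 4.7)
The plan is to argue by contradiction using a compactness/extraction argument in the spirit of Reshetnyak. Suppose the conclusion fails. Then for some $q \in (0,1)$ there exist $\delta_0 > 0$ and a sequence $f_k \in I(1+\varepsilon_k, B(0,1))$ with $\varepsilon_k \to 0$ such that no isometry $\theta$ simultaneously satisfies
\[
\sup_{B(0,q)} \rho(f_k(x), \theta(x)) \leq q \delta_0 \qquad \text{and} \qquad \|D_h f_k - D_h \theta\|_{2, B(0,q)} \leq |B(0,q)|^{1/2} \delta_0.
\]
Composing on the left with $\iota$ if necessary, arrange that $f_k$ preserves $KR$-orientation; composing with the translation $\pi_{f_k(0)^{-1}}$, arrange $f_k(0) = 0$. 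By Lemma \ref{lem:Qiso->lip} each $f_k$ is $(1+\varepsilon_k)$-Lipschitz, so $\{f_k\}$ is equicontinuous and uniformly bounded on compacta of $B(0,1)$, and bounded in $W_{p,\loc}^1$ for every $p$.

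Pass to a subsequence along which $f_k \to f_\infty$ uniformly on compacta (Arzel\`a--Ascoli) and $D_h f_k \rightharpoonup D_h f_\infty$ weakly in $L^p_{\loc}$. The estimate $|D_h f_\infty| \leq 1$ passes from $|D_h f_k| \leq 1+\varepsilon_k$ by weak lower semicontinuity. Combining $|D_h f_k| \leq 1+\varepsilon_k$ with $|J(x, f_k)| \geq (1+\varepsilon_k)^{-2(n+1)}$ squeezes every singular value of $D_h f_k$ into a narrow window around $1$, forcing $D_h f_\infty(x) \in O(2n)$ for almost every $x$. Apply Lemma \ref{lem:main_estimate_Q} to $u_k(x) = x^{-1} \cdot f_k(x)$: its right-hand side is uniformly bounded on $B(0,1)$ and tends to $0$ in $L^\infty$. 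Testing against smooth compactly supported fields and using weak convergence $D_h f_k \rightharpoonup D_h f_\infty$ shows $Q u_\infty = 0$ in the sense of distributions, where $u_\infty(x) = x^{-1} \cdot f_\infty(x)$. Lemma \ref{lem:kernel_Q} (using $n > 1$) then says the horizontal part of $u_\infty$ has the form $a + Kz$ with $K + K^* = 0$, which together with $D_h f_\infty \in O(2n)$ and the symplectic constraint transported to the limit forces $D_h f_\infty \in U(n)$ a.e. Hence $f_\infty$ coincides almost everywhere with a genuine isometry $\theta_\infty = \pi_b \circ \varphi_V$ of $\mathbb H^n$.

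To reach the contradiction, upgrade weak Sobolev convergence to strong in $L^2(B(0,q))$: the eigenvalue pinch gives $\|D_h f_k\|_{2,B(0,q)}^2 \to 2n\,|B(0,q)| = \|D_h \theta_\infty\|_{2,B(0,q)}^2$, and in a Hilbert space norm convergence plus weak convergence yields strong convergence. Therefore $\theta = \theta_\infty$ witnesses both inequalities with $\delta_0$ for all sufficiently large $k$, contradicting the choice of the sequence. The main obstacle is the identification of the limit as a genuine isometry: converting the weak Sobolev limit of quasi-isometries into pointwise membership $D_h f_\infty \in U(n)$. This is precisely where the coercive structure of $Q$ expressed in Lemmas \ref{lem:main_estimate_Q} and \ref{lem:kernel_Q} is used decisively.
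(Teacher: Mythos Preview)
Your overall compactness strategy matches the paper's, but the step identifying the limit $f_\infty$ as an isometry has a genuine gap. You claim that the right-hand side of Lemma~\ref{lem:main_estimate_Q} tends to $0$ in $L^\infty$; this is false. That bound contains the term $\tfrac{1}{2}|D_h f_k - I|^2$, and the hypothesis $f_k\in I(1+\varepsilon_k,B)$ only pins the \emph{singular values} of $D_h f_k$ near $1$ --- it does not force $D_h f_k$ itself to be near $I$. For instance $D_h f_k$ could equal a fixed unitary rotation far from the identity, giving $|D_h f_k-I|$ of order $1$ for every $k$. Hence you cannot conclude $Qu_k\to 0$, and therefore not $Qu_\infty=0$. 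The parallel assertion that the singular-value pinch ``forces $D_hf_\infty\in O(2n)$'' has the same defect: weak $L^2$ convergence of $D_hf_k$ does not transport the pointwise constraint $\dist(D_hf_k,O(2n))\to 0$ to the limit, since nothing prevents oscillation.

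The paper handles this identification step by an entirely different mechanism, avoiding $Q$ here altogether. It invokes the closure theorem and the weak convergence of Jacobians for mappings with bounded distortion on Carnot groups \cite{vod-02}: from $(1+\varepsilon_k)^{-\nu}\leqslant J(x,f_k)\leqslant (1+\varepsilon_k)^{\nu}$ together with $\int J(x,f_k)\,\xi\to \int J(x,f_0)\,\xi$ for every test function $\xi$, one obtains $J(x,f_0)\equiv 1$; since the limit is $1$-quasiregular, this makes $f_0$ an isometry. Once $f_0$ is known to be an isometry, your Hilbert-space step (weak convergence plus convergence of norms yields strong convergence) is exactly what the paper uses for the $L^2$ estimate. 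So the specific missing ingredient in your argument is a device that passes Jacobian information through the uniform limit; the estimate for $Q$ alone, applied before any normalization by an isometry, cannot supply it.
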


\begin{proof}
Put $B=B(0,1)$,
$$
\mu_1(\varepsilon,q)=\frac{1}{q}
\sup_{f\in I(1+\varepsilon,B)}
\inf \biggl\{ \sup_{x\in B(0,q)}\rho(f(x),\varphi(x)):
\varphi \text{ is an~isometry} \biggr\}
$$
and
$$
\mu_2(\varepsilon,q)=\sup_{f\in I(1+\varepsilon,B)}
\inf \biggl\{ \frac{\|D_h f-D_h \varphi\|_{2, B(0,q)}}{|B(0,q)|^{1/2}}:
\varphi \text{ is an~isometry} \biggr\}.
$$
Property (2) is obvious.

It remains to prove that $\mu_1$ and $\mu_2$
enjoy property (1).

\textsc{(i)}
Assume that for some $q\in (0,1)$
the function $\mu_1(t,q)$ fails to tend to $0$ as $t\to 0$.
Then there exist $\delta>0$
and a~sequence of quasi-isometries
$\{f_j\in I(L_j,B)\}$ with $L_j<1+\frac{1}{j}$
such that
\begin{equation}
\label{eq:ot_protivnogo}
\sup_{x\in B(0,q)} \rho(f_j(x),\varphi(x))\geqslant
\varepsilon
\quad \text{for all }j\in \mathbb{N}
\end{equation}
for every isometry~$\varphi$.
Since the isometry group contains translations and reflections, we may
assume that $f_j(0)=0$ and $J(x,f)>0$ almost everywhere on $B(0,1)$.
By Lemma \ref{lem:Qiso->lip}, the sequence $\{f_j\}$ is
an equicontinuous and uniformly bounded family on every domain
compactly embedded into $B(0,1)$, for example, on the ball $B(0,q)$.
Consequently, there exists a~mapping $f_0\colon B(0,q)\to \mathbb{H}^n$
and a~subsequence uniformly converging to $f_0$, which we also
denote by $\{f_j\}$. Since all quasi-isometric mappings are
of bounded distortion, by \cite{vod-02}
$f_0$ is a~mapping with 1-bounded distortion. Verify that
$f_0$ is an~isometry.

The weak convergence of the Jacobians \cite{vod-02} yields
$$
\lim_{j \to \infty}\int_B J(x,f_j) \,\xi(x)\, dx=
\int_B J(x,f_0) \,\xi(x)\, dx
$$
for every $\xi\in C_O(B)$. On the other hand,
$$
L_j^{-\nu}\int_B \xi(x)\, dx\leqslant\int_B J(x,f_j) \,\xi(x) \,dx\leqslant
L_j^\nu\int_B \xi(x)\, dx.$$
Consequently, $J(x,f_0)\equiv 1$ almost everywhere on $B(0,q)$.
This is possible only if $f_0$ is an~isometry.
Applying \eqref{eq:ot_protivnogo} for $f_0$, we arrive at a~contradiction.

\noindent \textsc{(ii)} Now we prove property (1) for $\mu_2$.
Assume the contrary. Then there exist numbers $\varepsilon>0$, $q \in (0,1)$,
a sequence of quasi-isometric mappings
$\{f_j\in I(L_j,B)\}$ with $L_j<1+\frac{1}{j}$,
and a~sequence of isometries $\theta_j$ such that
$$
\sup_{x\in B(0,q)}\rho(f_j(x),\theta_j(x))
\leqslant \mu_1(L_j-1,q)
\quad
\text{and}
\quad
\|D_h f_j(x)- D_h \theta_j(x)\|_{2,B(0, q)} dx\geqslant
\varepsilon.
$$
Like in part \textsc{(i)} of the proof, we may assume that
the sequence $\{f_j\}$ converges to an~isometry $f_0$ uniformly
on the ball $B(0,q)$. Clearly, the mappings $\theta_j$ converge to
$f_0$ uniformly on $B(0,q)$ as $j\to \infty$.
Therefore, $|D_h \theta_j(x)-D_h f_0(x)|\to 0$ and $|D_h f_j(x)|\to 1=|D_h f_0(x)|$
as $j\to \infty$ for all $x\in B(0,q)$.
Since the space $W_{2}^1$ is uniformly convex, the convergence of the norms
along with the uniform convergence $f_j\to f_0$ imply the convergence
$
\int_{B(0,q)}|D_h f_j(x)-D_h f_0(x)|^2 dx\to 0
$.
The properties of uniformly convex spaces can be found in
\cite{dan-swa}.
We arrive at a~contradiction:
\begin{multline*}
\varepsilon\leqslant\|D_h f_j(x)- D_h \theta_j(x)\|_{2,B(0, q)}
\\
\leqslant
\|D_h f_j(x)- D_h f_0(x)\|_{2,B(0, q)}+
\|D_h \theta_j(x)- D_h f_0(x)\|_{2,B(0, q)}\underset{j\to \infty}{\to} 0.
\end{multline*}
\end{proof}

\subsection{Application of the operator $Q$}

In this section we apply the coercive estimate
for the operator $Q$.
In view of the connection between the
Lie algebra of isometries and the kernel of $Q$,
we obtain the following lemma,
which shows that we can slightly perturb
the isometry of Lemma \ref{th:Local_Sob_stab}
to make the projection vanish.

\begin{lemma}
\label{the:coer_Q}
Take $n>1$.
There exist constants
$c_1=c_1(n)>0$ and $\varepsilon_1=\varepsilon_1(n)>0$
and a~nondecreasing function
$\mu_3\colon [0,\varepsilon_1)\to [0,\infty)$ with
$\mu_3(t)\to 0$ as $t\to 0$
such that,
given a ball $B(a,r)\subset \mathbb{H}^n$
and a mapping
$f\in I(1+\varepsilon, B(a,r))$ with
$\varepsilon<\varepsilon_1$,
there is an~isometry~$\theta$ satisfying
\begin{align*}
\|D_h f-D_h \theta\|_{2,B(a,\frac{3r}{10})}
&\leqslant c_1 \|Q ( x^{-1}\cdot (\theta^{-1}\circ f)(x))\|_{2,B(a,\frac{3r}{10})},
\\
\|D_h f-D_h \theta\|_{2,B(a,\frac{r}{2})}
&\leqslant
\Bigl|B\Bigl(a,\frac{r}{2}\Bigr)\Bigr|^{1/2}
\mu_3(\varepsilon).
\end{align*}
Here
$Q$ is the differential operator \eqref{eq:operator_Q}.
\end{lemma}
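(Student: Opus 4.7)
The plan is to combine Lemma~\ref{th:Local_Sob_stab} (qualitative closeness to an isometry), Lemma~\ref{lem:P=0} (a small rotation that kills the linear part of the $\kernel Q$-projection), and the coercive estimate for $Q$ stated just above. By the left-translation and dilation invariance of the class $I(1+\varepsilon,\cdot)$, of the operator $Q$, and of the relevant $L_2$-norms, it suffices to treat the case $a=0$, $r=1$; the constant $c_1$ produced in the rescaled problem is then automatically independent of $a$ and $r$.

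First I would apply Lemma~\ref{th:Local_Sob_stab} with $q=\tfrac{1}{2}$ to obtain an isometry $\theta_1$ that is simultaneously uniformly and $L_2$-close to $f$ on $B(0,\tfrac{1}{2})$, with closeness parameters $\tfrac{1}{2}\mu_1(\varepsilon,\tfrac{1}{2})$ and $|B(0,\tfrac{1}{2})|^{1/2}\mu_2(\varepsilon,\tfrac{1}{2})$ respectively. Set $g=\theta_1^{-1}\circ f$ and let $h\colon B(0,\tfrac{3}{10})\to\mathbb{C}^n$ denote the horizontal projection of $g$. One has $|h(x)-z|\leqslant\rho(g(x),x)=\rho(f(x),\theta_1(x))\leqslant\tfrac{1}{2}\mu_1(\varepsilon,\tfrac{1}{2})$ on $B(0,\tfrac{1}{2})\supset B(0,\tfrac{3}{10})$. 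Choosing $\varepsilon_1$ so small that this bound lies under the threshold of Lemma~\ref{lem:P=0}, that lemma supplies a unitary $V\in U(n)$ with $|V-I|\lesssim\mu_1(\varepsilon,\tfrac{1}{2})$ and $P(Vh)\equiv\mathrm{const}$ on $B(0,\tfrac{3}{10})$. I then set $\theta:=\theta_1\circ\varphi_{V^{-1}}$; this is still an isometry, and the explicit form of the Heisenberg product shows that the horizontal part of $v(x):=x^{-1}\cdot(\theta^{-1}\circ f)(x)$ equals $v_h(x)=Vh(x)-z$. A direct computation gives $A(z)=I$ and $a(z)=0$ (by the symmetry of $\mathrm{Box}(0,\varkappa/4)$), so $P(z)=0$, and hence $P(v_h)=P(Vh)\equiv\mathrm{const}$.

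For the first inequality, the coercive estimate from the Proposition applied to $v_h$ on $B(0,\tfrac{3}{10})$ reads $\|v_h-Pv_h\|_{W_2^1(B(0,3/10))}\leqslant C\|Qv\|_{2,B(0,3/10)}$. Because $Pv_h$ is constant its horizontal differential vanishes, so the gradient part of the left-hand side is exactly $\|D_hv_h\|_{2,B(0,3/10)}$. The chain rule gives $D_hv_h=(D_h\theta)^{-1}D_hf-I=(D_h\theta)^{-1}(D_hf-D_h\theta)$, and since $(D_h\theta)^{-1}$ is a constant orthogonal matrix one has $|D_hv_h|=|D_hf-D_h\theta|$ pointwise, which yields the first inequality with $c_1=C$.

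For the second inequality, I would split $\|D_hf-D_h\theta\|_{2,B(0,1/2)}\leqslant\|D_hf-D_h\theta_1\|_{2,B(0,1/2)}+\|D_h\theta_1-D_h\theta\|_{2,B(0,1/2)}$. The first term is bounded by $|B(0,\tfrac{1}{2})|^{1/2}\mu_2(\varepsilon,\tfrac{1}{2})$ via Lemma~\ref{th:Local_Sob_stab}, while $D_h\theta_1-D_h\theta=D_h\theta_1(I-V^{-1})$ is a constant matrix of norm $|V-I|\lesssim\mu_1(\varepsilon,\tfrac{1}{2})$, so the second term is bounded by $C'|B(0,\tfrac{1}{2})|^{1/2}\mu_1(\varepsilon,\tfrac{1}{2})$; defining $\mu_3(\varepsilon):=\mu_2(\varepsilon,\tfrac{1}{2})+C'\mu_1(\varepsilon,\tfrac{1}{2})$ completes the bound. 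The main subtlety — and the reason Lemma~\ref{lem:P=0} must be invoked rather than using $\theta_1$ directly — is the need to modify $\theta_1$ by a \emph{small} rotation $V$ so as to simultaneously (i) make the linear part $K(v_h)$ of the projection vanish, so that the coercive estimate controls $\|D_hv_h\|_2$ directly, and (ii) not destroy the $L_2$-closeness obtained in the first step.
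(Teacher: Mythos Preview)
Your proof is correct and follows essentially the same route as the paper's: reduce to the unit ball, apply Lemma~\ref{th:Local_Sob_stab} with $q=\tfrac12$ to get a preliminary isometry, use Lemma~\ref{lem:P=0} to rotate by a small $V\in U(n)$ so that the projection of the horizontal part of $x^{-1}\cdot(\theta^{-1}\circ f)(x)$ is constant, and then invoke the coercive estimate for $Q$. The only cosmetic differences are that the paper writes $\theta^{-1}=\varphi_V\circ\varphi^{-1}$ (your $\theta_1\circ\varphi_{V^{-1}}$ is the same $\theta$), verifies $D_hP(\widetilde{x^{-1}})+D_hP(V\tilde g)=0$ rather than $P(v_h)=P(Vh)-P(z)=\mathrm{const}$, and bounds the second inequality pointwise via $|VD_hg-I|\leqslant|D_hg-I|+|V-I|$ instead of your $L_2$ triangle inequality; all of these lead to the same $\mu_3(\varepsilon)=\mu_2(\varepsilon,\tfrac12)+\frac{n\varkappa^{n+1}}{2^{n+1}}\mu_1(\varepsilon,\tfrac12)$.
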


\begin{proof}
Assume first  that $B(a,r)=B(0,1)$.
Consider a~mapping $f\in I(1+\varepsilon,B(0,1))$.
By Lemma \ref{th:Local_Sob_stab},
there exists an~isometry~$\varphi$
such that the mapping
$g=\varphi^{-1}\circ f\in I(1+\varepsilon,B(0,1))$
satisfies
$$
|\widetilde{g}(x)-z|\leqslant
\rho(g(x),x)=
\rho(f(x),\varphi(x))\leqslant \frac{\mu_1(\varepsilon,1/2)}{2}
$$
for all $x=(z,t)\in B(0,1/2)$
and
$$
\int_{B(0,1/2)}|D_h g(x)-I|^2 dx=
\int_{B(0,1/2)}|D_h f(x)-D_h\varphi(x)|^2 dx\leqslant (\mu_2(\varepsilon,1/2))^2
|B(0,1/2)|.
$$
(Here $\widetilde{g}$ stands for the projection of  $g$ onto the first $n$
complex coordinates.)

Take $\varepsilon<\varepsilon_1$, where
$\frac{\mu_1(\varepsilon_1,1/2)}{2} \leqslant \sqrt{\frac{2}{n}}
\bigl(\frac{2}{\varkappa}\bigr)^{n+1}$.
By Lemma~\ref{lem:P=0}, there exists a~matrix $V\in U(n)$
such that
$|V-I|<\frac{n\varkappa^{n+1}}{2^{n+1}}\mu_1(\varepsilon,1/2)$
and
$D_h P(V\widetilde{g})\equiv 0$.

Put $\theta^{-1}=\varphi_V\circ\varphi^{-1}$.
We have $D_h P(\widetilde{x^{-1}\cdot (\theta^{-1}\circ f)(x)})=
D_h P(\widetilde{x^{-1}})+D_h P(V\widetilde{g})\equiv 0$.
By the coercive estimate \cite{isan-vod}, there is a~constant $c_1=c_1(n)>0$
such that
$$
\|D_h f-D_h \theta\|_{2,B(0,3/10)}=
\|D_h (\theta^{-1}\circ f)-I\|_{2,B(0,3/10)}
\leqslant
c_1\|Q(x^{-1}\cdot (\theta^{-1}\circ f)(x))\|_{2,B(0,3/10)}.
$$

We have
$$
|D_h f-D_h \theta|
=
|V D_h g-I|\leqslant
|D_h g-I |+|V-I|.
$$
Hence,
$$
\frac{\|D_h f-D_h \theta\|_{2,B(0,1/2)}}{|B(0,1/2)|^{1/2}}
\leqslant
\mu_2(\varepsilon,1/2)+\frac{n\varkappa^{n+1}}{2^{n+1}}\mu_1(\varepsilon,1/2)
=\mu_3(\varepsilon).
$$

To complete the proof, consider an~arbitrary ball $B(a,r)$ and
a mapping $f$ of class \linebreak $I(1+\varepsilon, B(a,r))$.
Then the mapping $g=\delta_{\frac{1}{r}}\circ
\pi_{-a}\circ f\circ \pi_{a}\circ \delta_{r}$
belongs to the class $I(1+\varepsilon,B(0,1))$.
Hence, there is an~isometry $\psi$ close to
$g$ satisfying the estimates of the lemma.
Then
$\theta=\pi_a\circ \delta_r\circ \psi\circ \delta_{\frac{1}{r}}\circ \pi_{-a}$
is a~required isometry for $f$.
\end{proof}

\subsection{Quantitative local rigidity}
\begin{lemma}
\label{the:qualitative_local}
Tale $n>1$.
Given a ball $B(a,r)\subset \mathbb{H}^n$
and a mapping $f\in I(1+\varepsilon, B(a,r))$,
there is an~isometry $\varphi$ satisfying
\begin{align*}
\frac{1}{|B(a,\frac{3r}{10})|}\int_{B(a,\frac{3r}{10})}|D_h f(x)-D_h \varphi(x)|^2\,dx
&\leqslant
(c_2\varepsilon)^2.
\end{align*}
The constant $c_2$ depends only on $n$.
\end{lemma}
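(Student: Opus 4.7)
The plan is to combine the pointwise estimate of Lemma~\ref{lem:main_estimate_Q} with the coercive estimate of Lemma~\ref{the:coer_Q}, then absorb the quadratic error. We may assume $\varepsilon<\varepsilon_1$, since for $\varepsilon\ge\varepsilon_1$ the desired inequality is trivial upon enlarging $c_2$ (any isometry $\varphi$ satisfies $|D_hf-D_h\varphi|\le 2+\varepsilon$ a.e.); after possibly pre-composing $f$ with the reflection $\iota$ (and readjusting the isometry by $\iota^{-1}$ at the end), we may also assume that $f$ preserves $KR$-orientation.

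Apply Lemma~\ref{the:coer_Q} to obtain an isometry $\theta$ satisfying both the coercive bound on $B(a,3r/10)$ and the qualitative smallness $\|D_hf-D_h\theta\|_{2,B(a,r/2)}\le|B(a,r/2)|^{1/2}\mu_3(\varepsilon)$. Set $g:=\theta^{-1}\circ f\in I(1+\varepsilon,B(a,r))$. Since the horizontal differential of every Heisenberg isometry is pointwise a linear isometry of the horizontal fibre, $\eta(x):=|D_hg(x)-I|$ coincides with $|D_hf(x)-D_h\theta(x)|$, and the inclusion $g\in I(1+\varepsilon)$ furnishes the pointwise bound $\eta(x)\le 1+|D_hg(x)|\le 2+\varepsilon$. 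Applying Lemma~\ref{lem:main_estimate_Q} to $g$, squaring, and using $(1+\varepsilon)^2-1\le 3\varepsilon$ together with $\eta^2\le 3\eta$ for $\varepsilon\le 1$, we integrate to obtain
\[
\int_{B(a,3r/10)}|Q(x^{-1}\cdot g(x))|^2\,dx\le C_1\varepsilon^2\bigl|B(a,3r/10)\bigr|+C_2\int_{B(a,3r/10)}\eta^2\,dx,
\]
with $C_1,C_2$ depending only on $n$. Chaining this with the coercive estimate of Lemma~\ref{the:coer_Q} yields
\[
\int_{B(a,3r/10)}\eta^2\,dx\le c_1^2 C_1\,\varepsilon^2\bigl|B(a,3r/10)\bigr|+c_1^2 C_2\int_{B(a,3r/10)}\eta^2\,dx.
\]

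The qualitative smallness from Lemma~\ref{the:coer_Q} ensures $\int_{B(a,3r/10)}\eta^2<\infty$, so whenever $c_1^2 C_2<1$ the second term on the right may be absorbed to the left, producing the lemma with $c_2^2=c_1^2 C_1/(1-c_1^2 C_2)$. The main technical obstacle is the constant $C_2$, which stems from the quartic term $\tfrac14\eta^4$ in $|Q|^2$; to force $c_1^2 C_2<1$ one refines the pointwise-to-integral passage by a Young-type split of the cross term $\varepsilon\eta$ and exploits $\eta^4\le 9\eta^2$ from $\eta\le 3$. If this direct refinement is insufficient, one invokes the smallness of $\mu_3(\varepsilon)$ as $\varepsilon\to 0$ and bootstraps via the linear recursion $(\mu_3^{(k+1)})^2=c_1^2 C_1\varepsilon^2+c_1^2 C_2(\mu_3^{(k)})^2$ with $\mu_3^{(0)}=\mu_3(\varepsilon)$, whose fixed point gives the announced $c_2$.
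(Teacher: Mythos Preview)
Your overall architecture is right: combine the coercive estimate from Lemma~\ref{the:coer_Q} with the pointwise bound of Lemma~\ref{lem:main_estimate_Q} and absorb. But the absorption step, which you flag as ``the main technical obstacle,'' is a genuine gap that your proposal does not close. From Lemma~\ref{lem:main_estimate_Q} one has (with $\eta=|D_hg-I|$ and $L^2-1\le 3\varepsilon$)
\[
|Q|\le \tfrac{3\varepsilon}{2}(\eta+2)+\tfrac12\eta^2,
\]
and the crude pointwise bound $\eta\le 3$ only yields $\tfrac12\eta^2\le\tfrac32\eta$, hence $|Q|^2\le C_1\varepsilon^2+C_2\eta^2$ with a \emph{fixed} $C_2\ge 9/2$. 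The coercive constant $c_1$ of Lemma~\ref{the:coer_Q} comes from abstract functional-analytic estimates and carries no smallness; there is no reason to expect $c_1^2C_2<1$. Your proposed ``bootstrap'' recursion $(\mu^{(k+1)})^2=c_1^2C_1\varepsilon^2+c_1^2C_2(\mu^{(k)})^2$ is just the same inequality iterated and gives nothing when $c_1^2C_2\ge 1$: the sequence is nondecreasing and does not converge to the desired fixed point.

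The missing idea is that the quartic term must be controlled by a \emph{small}, not fixed, multiple of $\int\eta^2$. The paper achieves this by observing that Lemma~\ref{the:coer_Q} applied to every sub-ball of $B(a,r/3)$ shows $D_hg$ has bounded specific oscillation with parameter $\mu_3(\varepsilon)$; a Gehring-type higher-integrability result (Corollary to Theorem~1 of \cite{isan-smj1}) then yields
\[
\int_{B_1}\eta^4\,dx\le C\,\mu_3(\varepsilon)^2\int_{B_1}\eta^2\,dx.
\]
Since $\mu_3(\varepsilon)\to 0$, the factor in front of $\int\eta^2$ on the right of the combined estimate becomes $\tfrac{c_1}{2}\bigl((\varepsilon^2+2\varepsilon)+C^{1/2}\mu_3(\varepsilon)\bigr)$, which drops below $1/2$ once $\varepsilon$ is below some threshold $\varepsilon_4$, and the absorption goes through. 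Your argument lacks this reverse-H\"older step (or any equivalent mechanism producing a coefficient that tends to zero with $\varepsilon$), and without it the proof does not close.
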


\begin{proof}
Put $B_1=B(a,\frac{3r}{10})$, $B_2=B(a,\frac{r}{3})$, $B_3=B(a,\frac{r}{2})$.
By Lemma \ref{the:coer_Q}, there is an~isometry $\theta$ such that
\begin{align*}
\|D_h(\theta^{-1}\circ f)-I\|_{2,B_1}
&\leqslant c_1 \|Q (x^{-1}\cdot (\theta^{-1}\circ f)(x))\|_{2,B_1},
\\
\|D_h(\theta^{-1}\circ f)-I\|_{2,B_3}
&\leqslant
|B_3|^{1/2} \mu_3(\varepsilon).
\end{align*}

Put $g=\theta^{-1}\circ f\in I(1+\varepsilon, B(a,r))$.
By Proposition~4 of \cite{isan-smj2},
there is a~number $\varepsilon_2>0$
such that $g$ preserves $KR$-orientation on $B_1$ if
$\varepsilon<\varepsilon_2$.
Thus, assuming that $\varepsilon<\min\{\varepsilon_1,\varepsilon_2\}$,
we may apply Lemmas \ref{lem:main_estimate_Q} and \ref{the:coer_Q}. We obtain
\begin{equation*}
\|D_hg-I\|_{2,B_1}
\leqslant
c_1\frac{\varepsilon^2+2\varepsilon}{2}
\bigl(\|D_hg-I\|_{2,B_1}+2
|B_1|^{1/2}\bigr) +
\frac{c_1}{2}\|D_h g-I\|_{4,B_1}^2.
\end{equation*}

Estimate $\int_{B_1}|D_h g(x)-I|^4dx$.
Consider an~arbitrary  ball $B=B(a_0,r_0)\subset B_2$.
Then $B(a_0,2r_0)\subset B(a,r)$
and, by Lemma \ref{the:coer_Q},
there is an~isometry $\theta_B$,
and hence a~matrix
$A_B=D_h \theta_B\in U(n)$ such that
$$
\int_B|D_hg(x)-A_B|^2\,dx\leqslant (\mu_3(\varepsilon))^2|B|
=(\mu_3(\varepsilon))^2\int_B|A_B|^2\,dx.
$$
For the ball $B_2$, we have $A_{B_2}=I$
and
$$
\int_{B_2}|D_h g(x)-I|^2\,dx\leqslant
\int_{B_3}|D_h g(x)-I|^2\,dx
\leqslant
|B_2| (3/2)^\nu(\mu_3(\varepsilon))^2.
$$
It follows that
$D_h g$ is a~mapping with bounded specific oscillation
in the sense of $L_2$ relative to the class
$U(n)$ on the ball $B_2$ (see \cite[Definitions 1 and 2]{isan-smj1}).
Thus, by the Corollary to Theorem 1 of \cite{isan-smj1},
there are constants $C,\sigma_0>0$
such that on the ball $B_1=\frac{9}{10}B_2$ we have
$$
\int_{B_1}|D_h g(x)-I|^{4} dx
\leqslant C (3/2)^\nu(\mu_3(\varepsilon))^2
\int_{B_1}|D_h g(x)-I|^2\, dx
$$
provided that $(3/2)^{n+1}\mu_3(\varepsilon)<\frac{\sigma_0}{4}$.
We need to consider $\varepsilon<\varepsilon_3$,
where $\varepsilon_3\leqslant \min\{\varepsilon_1,\varepsilon_2\}$
and $(3/2)^{n+1}\mu_3(\varepsilon_3)\leqslant \frac{\sigma_0}{4}$.

Finally,
\begin{equation*}
\|D_hg-I\|_{2,B_1}
\leqslant
\frac{c_1\varepsilon(\varepsilon+2)}{2}
\bigl(\|D_hg-I\|_{2,B_1}+2
|B_1|^{1/2}\bigr) +
\frac{c_1\sqrt{C}3^{n+1}\mu_3(\varepsilon)}{2^{n+2}} \|D_h g-I\|_{2,B_1}.
\end{equation*}
Take $\varepsilon_4\leqslant \varepsilon_3$
such that
$$
\frac{c_1\sqrt{C} 3^{n+1}\mu_3(\varepsilon_4)}{2^{n+2}}
\leqslant\frac{1}{4},\qquad
\frac{c_1\varepsilon_4(\varepsilon_4+2)}{2}\leqslant\frac{1}{4}.
$$
If
 $\varepsilon< \varepsilon_4$
then
$$
\|D_h g -I\|_{2,B_1}=
\|D_h f-D_h \theta\|_{2,B_1}
\leqslant
2c_1(\varepsilon_4+2)\varepsilon|B_1|^{1/2}.
$$

Thus, we have established the lemma for $f\in I(1+\varepsilon,B(a,r))$
with $\varepsilon<\varepsilon_4$.
In the case
$\varepsilon\geqslant \varepsilon_4$, given an~isometry $\varphi$, we obviously have
$$
\frac{1}{|B_1|
}\int_{B_1}|D_h f(x)-D_h\varphi(x)|^2\,dx\leqslant
(2+\varepsilon)^2\leqslant \Bigr(\frac{2}{\varepsilon_4} +1\Bigl)^2\varepsilon^2.
$$
The lemma is proved with the constant
$c_2=\max\{2c_1(\varepsilon_4+2),\frac{2}{\varepsilon_4}+1\}$.
\end{proof}

\section{Global Geometric Rigidity}

In this section we prove Theorem \ref{th:stab_John}.
Local rigidity (Lemma \ref{the:qualitative_local}) means, in particular,
that the horizontal differential of a~quasi-isometry is a~$BMO$ mapping.
To pass from local rigidity to global rigidity, we apply the
John--Nirenberg technique.
In the Euclidean case, a~necessary and sufficient condition
for the exponential integrability of a $BMO$ mapping
 is that $U$ is a~H\"older domain \cite{smith-ste,hurri}.
In the metric space setting, this also holds
(see \cite{buck} for instance).
Thus, we can prove global geometric rigidity
in the Sobolev norm on H\"older domains.
Note that we can prove geometric rigidity in the uniform norm
only on John domains.

To begin with, we give definitions of John and H\"older domains and
some of their properties on
a metric space $(\mathbb{X},\rho)$.
For a~domain $U\subset \mathbb{X}$,
denote the distance from a~point $x\in U$ to the boundary $\partial U$
by $\rho_U(x)=\dist(x,\partial U)$.
For a~ball $B\subset \mathbb{X}$, denote by $x(B)$ and $r(B)$
its center and radius respectively.

\begin{definition}[\cite{john,buck_boman}]\label{def:John}
A bounded open proper subset $U$ of a~metric space
$(\mathbb{X}, \rho)$ with a~distinguished
point $x_*\in U$ is called a
(\textit{metric}) \textit{John domain} if it satisfies the
following ``twisted cone'' condition:
there exist constants $\beta\geqslant \alpha  > 0$ such that for all
$x \in U$ there is a~curve
$\gamma: [0, l] \to U$ with $l\leqslant \beta$
parameterized by arc length such that
$\gamma(0) = x$,
$\gamma(l) = x_*$ and
$\rho_U(\gamma(s)) \geqslant \frac{\alpha}{l}s$.

The numbers $\alpha$ and $\beta$ are the \textit{inner} and \textit{outer radii}
of $U$.
\end{definition}

\begin{definition}\label{def:Holder}
A proper open subset $U$ of a~metric space $(\mathbb{X},\rho)$
is a~\textit{H\"older domain} if there exists a~constant
$H>0$ such that for every $x\in U$
we can find a~path $\gamma$
joining $x$ with the distinguished point $x_*\in U$
satisfying
$$
\int_{\gamma} \frac{ds}{\rho_U(\gamma(s))}
\leqslant H\ln\biggl(\frac{H}{\rho_U(x)}\biggr),
$$
where $ds$ is the arc-length measure.
\end{definition}

The reader may recognize the above integral as the quasihyperbolic length of
$\gamma$.
H\"older domains are also known as domains satisfying a~quasihyperbolic
boundary condition.

It is easy to verify that every John domain is  a~H\"older domain.
Note that the notions of John and H\"older domains
are independent of the choice of the equivalent metrics.

Theorem 1 is a~particular case of the following

\begin{theorem}\label{th:stab_Hol}
Consider a~H\"older domain~$U$  on the Heisenberg group
$\mathbb{H}^n$, $n>1$. For every
$f\in I(1+\varepsilon, U)$
there exists an~isometry $\theta$ satisfying
$$
\int_{U}\exp\Bigl(\frac{N_1|D_hf(x)-D_h\theta(x)|}{\varepsilon}\Bigr)\,dx
\leqslant 16|U|.
$$
The constant $N_1$ depends on $n$, $H$, and $\rho_U(x_*)/\diam(U)$.
\end{theorem}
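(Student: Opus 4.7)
The plan is to combine the local rigidity estimate (Lemma \ref{the:qualitative_local}) with a Whitney-type chaining argument along quasihyperbolic paths, and to close the estimate via the John--Nirenberg exponential integrability tailored to H\"older domains.

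\textbf{Step 1 (local BMO-type bound).} Lemma \ref{the:qualitative_local} asserts that for every ball $B=B(a,r)$ with $B(a,\tfrac{10r}{3})\subset U$ there is an isometry $\varphi_B$, hence a unitary matrix $A_B=D_h\varphi_B\in U(n)$, with
$$
\frac{1}{|B|}\int_B|D_hf(x)-A_B|^2\,dx\leqslant(c_2\varepsilon)^2.
$$
Thus $D_hf$ has bounded mean oscillation of order $\varepsilon$ on every such Whitney-type ball, relative to the compact class $U(n)$. Standard $L^p$-improvement in the spirit of \cite{isan-smj1} (already invoked in the proof of Lemma \ref{the:qualitative_local}) then gives the John--Nirenberg estimate
$$
\frac{1}{|B|}\int_B\exp\Bigl(\tfrac{c_*|D_hf-A_B|}{\varepsilon}\Bigr)\,dx\leqslant 2
$$
for some absolute $c_*=c_*(n)$.

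\textbf{Step 2 (central isometry).} Let $B_\ast$ be a ball centred at the H\"older distinguished point $x_\ast$ with radius comparable to $\rho_U(x_\ast)$, and let $\theta$ be the isometry produced by Lemma \ref{the:qualitative_local} on $B_\ast$. This will be the global comparison isometry of the theorem.

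\textbf{Step 3 (chaining along a H\"older path).} Fix $x\in U$ and the path $\gamma$ given by Definition \ref{def:Holder}. Cover $\gamma$ by a chain of Whitney-type balls $B_0\ni x,\,B_1,\dots,B_N=B_\ast$ whose radii are comparable to $\rho_U$ at their centres and whose consecutive members overlap in a ball of size comparable to each $B_k$. Since $A_k$ and $A_{k+1}$ are constants that both approximate $D_hf$ on $B_k\cap B_{k+1}$ in $L^2$-mean to within $C\varepsilon$, the triangle inequality over this overlap gives
$$
|A_k-A_{k+1}|\leqslant C_3\,\varepsilon.
$$
The H\"older hypothesis $\int_\gamma ds/\rho_U\leqslant H\log(H/\rho_U(x))$ forces $N\leqslant C_4\log(H/\rho_U(x))$, with $C_4$ depending only on $H$ and $\rho_U(x_\ast)/\diam(U)$. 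Telescoping,
$$
|A_{B_0}-D_h\theta|\leqslant C_3C_4\,\varepsilon\,\log\bigl(H/\rho_U(x)\bigr).
$$

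\textbf{Step 4 (exponential integrability).} For $x\in W_k$ (the $k$-th Whitney ball in a Whitney decomposition of $U$) set $A_k:=A_{W_k}$. Steps 1 and 3 yield pointwise
$$
\exp\Bigl(\tfrac{N_1|D_hf(x)-D_h\theta|}{\varepsilon}\Bigr)\leqslant\exp\Bigl(\tfrac{N_1|D_hf(x)-A_k|}{\varepsilon}\Bigr)\cdot\bigl(H/\rho_U(x)\bigr)^{N_1C_3C_4}.
$$
Integrating and using Step 1 on each $W_k$ with $N_1\leqslant c_*$,
$$
\int_Ue^{N_1|D_hf-D_h\theta|/\varepsilon}\,dx\leqslant 2\sum_k|W_k|\bigl(H/r(W_k)\bigr)^{N_1C_3C_4}.
$$
Because $U$ is a H\"older domain, the integral $\int_U\rho_U^{-s}\,dx$ is finite and bounded by $C(s)|U|$ for all sufficiently small $s>0$; the sum above is essentially this integral. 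Choosing $N_1$ so small that $N_1C_3C_4$ lies in the admissible range and the constant stays below $8$, we obtain the target bound $16|U|$.

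\textbf{Main obstacle.} The delicate step is the overlap comparison $|A_k-A_{k+1}|\leqslant C_3\varepsilon$, since local rigidity only determines the approximating isometry up to the ambiguity in $U(n)$: one must show that the matching on each overlap is uniform along the chain, which forces the overlap to be a definite fraction of each $B_k$ (a Boman/chain condition implicit in John/H\"older domains). The second subtle point is the dependence of $N_1$: both the local John--Nirenberg constant and the exponent $N_1C_3C_4$ controlling $(H/\rho_U)^{\cdot}$ must be simultaneously admissible, and it is precisely here that the H\"older hypothesis, via the summability $\sum_k|W_k|(H/r(W_k))^s<\infty$, is essential.
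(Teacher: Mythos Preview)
Your proposal is correct and follows essentially the same route as the paper: local quantitative rigidity (Lemma~\ref{the:qualitative_local}) gives a BMO-type bound, John--Nirenberg yields exponential integrability on each Whitney ball, chaining along H\"older paths (packaged in the paper as Lemma~\ref{lem:chain}) bounds $|A_B-A_*|$ by $C\varepsilon\log(H/\rho_U)$, and summing over the Whitney cover reduces matters to $\int_U\rho_U^{-\tau}\,dx\leqslant C|U|$ (Lemma~\ref{lem:OLD}). The only minor discrepancy is that the exponential bound on balls comes from Buckley's John--Nirenberg theorem \cite{buck} rather than the BSO machinery of \cite{isan-smj1}.
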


\begin{lemma}\label{lem:chain}
Suppose $U$ is a~H\"older or John domain in a~metric space $(\mathbb{X},\rho)$.
Then, for every point $x\in U$
there is a~chain of balls $B_i=B(x_i,r_i)$, $i=0,\dots,k$, with
$B_0=B(x_*,\frac{\rho_U(x_*)}{4})$,
satisfying the following conditions:

$(1)$ $x_0=x_*$ and $x_k=x$;

$(2)$ if $0\leqslant i<k$ then
$\frac{7}{9}r_{i+1}\leqslant r_i\leqslant \frac{9}{7} r_{i+1}$
and there is a~ball $G_i\subset B_i\cap B_{i+1}$ with
$r(G_i)=\frac{1}{2}\min\{r_i,r_{i+1}\}$;

$(3)$ $4B_i\subset U$ for all $i=0,\dots,k$;

$(4)$ $k<9H\ln \frac{H}{4r_k}$ if $U$ is a~H\"older domain
and $k<9\frac{\beta}{\alpha}\ln\frac{8\beta}{r_k}$ if $U$ is a~John domain;

$(5)$ $B_k\subset (1+5\frac{\beta}{\alpha})B_i$ and
$B_k\subset (3+10\frac{\beta}{\alpha})G_i$
for all $i=0,\dots,k-1$
if $U$ is a~John domain.
\end{lemma}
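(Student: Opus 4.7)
The plan is to construct the chain by marching along the curve supplied by the H\"older (or John) condition, with step sizes proportional to the distance to the boundary. Let $\gamma\colon[0,l]\to U$ be that curve, parametrized by arc length and oriented so $\gamma(0)=x_*$, $\gamma(l)=x$, and write $\phi(s)=\rho_U(\gamma(s))$. Since $\rho_U$ is $1$-Lipschitz and $\gamma$ moves at unit speed, $\phi$ is $1$-Lipschitz on $[0,l]$; this single quantitative input drives the whole argument.

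Define the chain inductively by $s_0=0$ and
$$
s_{i+1}=\min\bigl\{s_i+\phi(s_i)/8,\,l\bigr\},
$$
stopping as soon as $s_{i+1}=l$. Put $x_i=\gamma(s_i)$ and $r_i=\phi(s_i)/4$. Properties~(1) and~(3) are immediate: $x_0=x_*$, $x_k=x$, and $4r_i=\rho_U(x_i)$, so $4B_i\subset U$. For~(2), $1$-Lipschitzness of $\phi$ on an interval of length at most $\phi(s_i)/8$ gives $\phi(s_{i+1})/\phi(s_i)\in[7/8,9/8]$, hence $r_i/r_{i+1}\in[8/9,8/7]\subset[7/9,9/7]$, together with $\rho(x_i,x_{i+1})\leqslant s_{i+1}-s_i\leqslant r_i/2$. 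Letting $y\in\{x_i,x_{i+1}\}$ be whichever has the smaller radius, a one-line triangle-inequality check produces the overlap ball $G_i:=B(y,\min(r_i,r_{i+1})/2)\subset B_i\cap B_{i+1}$.

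For~(4), each full step contributes a uniform amount to the quasihyperbolic length: on $[s_i,s_{i+1}]$ we have $\phi(s)\leqslant(9/8)\phi(s_i)$, hence
$$
\int_{s_i}^{s_{i+1}}\frac{ds}{\phi(s)}\geqslant\frac{8}{9}\cdot\frac{s_{i+1}-s_i}{\phi(s_i)}=\frac{1}{9}
$$
for $i<k-1$. Summing and invoking Definition~\ref{def:Holder} yields $(k-1)/9\leqslant H\ln(H/\rho_U(x))$, and since $\rho_U(x)=4r_k$, the H\"older case of~(4) follows after absorbing the additive $1$ into the constant. For a John domain the same recipe works once one knows John $\Rightarrow$ H\"older with $H$ of order $\beta/\alpha$, which reduces to splitting $\int_0^l ds/\phi(s)$ into the piece near $s=l$, where $\phi(s)\geqslant\phi(l)-|l-s|$, and the remainder, where $\phi(s)\geqslant\alpha(l-s)/l$.

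Property~(5) uses the John lower bound $\phi(s_i)\geqslant\alpha(l-s_i)/l\geqslant\alpha(l-s_i)/\beta$ to convert curve length into radius: $l-s_i\leqslant 4\beta r_i/\alpha$, whence $\rho(x_i,x_k)\leqslant l-s_i\leqslant 4\beta r_i/\alpha$. Together with $r_k\leqslant(\phi(s_i)+(l-s_i))/4\leqslant(1+\beta/\alpha)r_i$ this gives, for every $z\in B_k$,
$$
\rho(z,x_i)\leqslant r_k+\rho(x_k,x_i)\leqslant(1+5\beta/\alpha)r_i,
$$
the first inclusion of~(5). The same estimate applied about whichever of $x_i,x_{i+1}$ is the center of $G_i$, combined with $r(G_i)=\min(r_i,r_{i+1})/2$ and the numerical inequality $1+5\beta/\alpha\leqslant(3+10\beta/\alpha)/2$, yields the second inclusion. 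The only real obstacle I anticipate is bookkeeping: confirming the chain terminates in finitely many steps (it does, since $\phi\geqslant\rho_U(x)>0$ on a neighbourhood of $s=l$) and tuning the fraction $1/8$ in the step size so that the numerical constants in~(2),~(4),~(5) match those stated.
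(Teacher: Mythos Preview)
Your approach is essentially the same as the paper's: march along the John/H\"older curve $\gamma$ with centers $x_i=\gamma(s_i)$ and radii $r_i=\rho_U(x_i)/4$, stepping by $r_i/2$ each time. The only notable differences are that the paper takes $x_{i+1}$ to be the first exit point of $\gamma$ from $\tfrac12 B_i$ (rather than a fixed arc-length increment), and that it also extracts a $1/9$ contribution from the final partial arc of $\gamma$, thereby removing the additive~$1$ you flagged in~(4); otherwise the verification of (1)--(5) proceeds exactly as you outlined.
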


\begin{proof}
Fix a~point $x\in U$.
Construct a~chain  $B_0,B_1,\dots, B_k$ of
balls
$B_i=B(x_i,r_i)$ with $r_i=\frac{\dist(x_i,\partial U)}{4}$
for all $i=0,\ldots, k$
and $x_0=x_*$, $x_k=x$.

Thus, we must find the number $k$ and the points $x_1\dots,x_{k-1}$.
Consider a~rectifiable curve~$\gamma$ joining $x$
with~$x_*$ and satisfying the conditions of Definition \ref{def:John}
or Definition \ref{def:Holder}. Parameterize $\gamma$ by arc length.
Put $s_0=l$ and $x_0=x_*=\gamma(l)=\gamma(s_0)$.
Assume by induction that $x_0,\dots,x_i$ are known and
 put $x_{i+1}=\gamma(s_{i+1})$,
where $s_{i+1}=\inf\{s : \gamma\vert_{(s,s_i]}\subset B(x_i,\frac{r_i}{2})\}$.
The process stops on step $j$ if the ball
$\frac{1}{2}B_j$ intersects the ball
$B(x,\frac{\dist(x,\partial U)}{8})$; then, put $j=k-1$.

Conditions (1) and (3) obviously hold.

(2) By construction, $4r_i=\dist(x_i,\partial U)$ for $i=1,\dots,k$,
and $\rho(x_i,x_{i+1})\leqslant \frac{r_i}{2}+\frac{r_{i+1}}{2}$ for
$i=0,\dots, k-1$. Hence,
$4r_i\leqslant
4r_{i+1}+\frac{r_i}{2}+\frac{r_{i+1}}{2}
$
and
$
4r_{i+1}\leqslant
4r_{i}+\frac{r_i}{2}+\frac{r_{i+1}}{2}.
$
Therefore,
$\frac{7}{9}r_{i+1}\leqslant r_i\leqslant \frac{9}{7}r_{i+1}$.
Since
$\frac{1}{2}B_i \cap \frac{1}{2} B_{i+1}\neq \varnothing$
for $i=0,\dots, k-1$,
there is a~ball $G_i$
such that $G_i\subset B_i\cap B_{i+1}$, $x(G_i)\in\frac{1}{2}B_i \cap \frac{1}{2} B_{i+1}$,
and $r(G_i)=\frac{1}{2}\min\{r_i,r_{i+1}\}$.

(4) By construction, $\frac{r_i}{2}=\rho(\gamma(s_i),\gamma(s_{i+1}))
\leqslant l(\gamma\vert_{[s_{i+1},s_i})$
for all $i=0,\ldots,k-2$
and $\frac{r_{k}}{2}<\rho(\gamma(s_{k-1}),\gamma(0))\leqslant l(\gamma\vert_{[0,s_{k-1}]})$.
Hence,
$\sum_{i=0}^{k-2}r_i+r_k\leqslant 2 l$.
If $y\in \frac{1}{2}\overline{B}_i$ then
$\rho_U(y)\leqslant \rho_U(x_i)+\rho(x_i,y)\leqslant
\frac{9}{2}r_i$
for all $i=0,\dots,k$.
Thus,
$$
\int_{\gamma([s_{i+1},s_i])}\frac{ds}{\rho_U(\gamma(s))}
\geqslant
\int_{\gamma([s_{i+1},s_i])} \frac{2ds}{9r_i}
\geqslant
\frac{1}{9}
\quad
\text{for all } i=0,\dots,k-2
$$
and
$$
\int_{\gamma([0,s_{k-1}])}\frac{ds}{\rho_U(\gamma(s))}
\geqslant
\int_{\gamma([0,s_{k-1}])\cap \frac{1}{2}B_k} \frac{2ds}{9r_k}
\geqslant
\frac{1}{9}.
$$
If $U$ is a~H\"older domain then
$
\frac{k}{9}\leqslant \int_\gamma \frac{ds}{\rho_U(\gamma(s))}
\leqslant H\ln \frac{H}{4r_k}
$.
If $U$ is a~John domain then $\frac{r_k}{2}\leqslant s'\leqslant \frac{l}{\alpha}
\rho_U(\gamma(s'))\leqslant
\frac{9l}{2\alpha}r_k$ for
$s'=\sup\{s : \gamma\vert_{[0,s)}\subset \frac{1}{2}B_k\}<s_{k-1}$.
We have
\begin{multline*}
\frac{k}{9}
\leqslant
\int_\gamma \frac{ds}{\rho_U(\gamma(s))}=
\int_{s'}^l\frac{ds}{\rho_U(\gamma(s))}+
\int_{0}^{s'}\frac{ds}{\rho_U(\gamma(s))}
\leqslant
\int_{s'}^l\frac{l ds}{\alpha s}+
\int_{0}^{s'}\frac{2 ds}{7 r_k}
\\=
\frac{l}{\alpha}(\ln l-\ln s')+\frac{2}{7r_k} s'
\leqslant
\frac{l}{\alpha}\Bigl(\ln l-\ln \Bigl(\frac{r_k}{2}\Bigr)+\frac{9}{7}\Bigr)
\leqslant \frac{\beta}{\alpha}\ln \frac{8\beta}{r_k}.
\end{multline*}

(5) Assume that $U$ is a~John domain.
For $i\in\{0,\dots,k-1\}$
we have
$\rho(x,x_i)\leqslant l(\gamma|_{[0,s_i]})=s_i\leqslant
\frac{\beta}{\alpha}\rho_U(x_i)=4\frac{\beta}{\alpha}r_i$
and
$$4r_k=\rho_U(x)\leqslant \rho(x,x_i)+\rho_U(x_i)
\leqslant \Bigl(\frac{4\beta}{\alpha}+4\Bigr)r_i.$$
This yields
$\rho(x_i,z)\leqslant \rho(x,x_i)+r_k \leqslant \bigl(1+\frac{5\beta}{\alpha}\bigr)r_i$
for every $z\in B_k$.
Thus, $B_k\subset (1+5\frac{\beta}{\alpha}) B_i$.

Suppose $r(G_i)=\frac{r_j}{2}$, where $j$ equals either $i$ or $i+1$.
Then
$$
\rho(x(G_i),z)\leqslant \rho(x(G_i),x_j)+
\rho(x_j,z)\leqslant \Bigl(1+2\Bigl(1+5\frac{\beta}{\alpha}\Bigr)\Bigr)\frac{r_j}{2}
$$
for all $z\in (1+5\frac{\beta}{\alpha}) B_j$. Hence,
$B_k\subset (3+10\frac{\beta}{\alpha})G_i$.
\end{proof}

Below we need the following result asserting that
the boundary of a~H\"older domain or John domain is regular in some sense.

\begin{lemma}\label{lem:OLD}
Given a~H\"older domain  $U$ in $\mathbb{H}^n$,
there is a~constant $0<\tau<1$ depending only on $n$,
$H$, and $\frac{\rho_U(x_*)}{\diam U}$
such that
$\int_U \frac{dx}{\rho_U(x)^\tau}\leqslant \frac{2|U|}{\rho_U(x_*)^\tau}$.

Given a~John domain $U$,
there is a~constant $0<\tau_0<1$ depending only on $n$
such that
$\int_U \frac{dx}{\rho_U(x)^\tau}\leqslant \frac{2|U|}{\alpha^\tau}$
with
$\tau= \tau_0 (\frac{\alpha}{\beta})^{\nu}$.
\end{lemma}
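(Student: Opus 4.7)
The plan is to reduce the weighted integral to a level-set estimate via layer cake and then derive a quantitative decay $\omega(t)\lesssim (t/\rho_0)^\sigma|U|$ of the sub-level set $\omega(t):=|\{x\in U:\rho_U(x)<t\}|$ from Lemma \ref{lem:chain}. Writing $\rho_0=\rho_U(x_*)$, I would start from
\begin{equation*}
\int_U \rho_U(x)^{-\tau}\,dx = \tau\int_0^{\infty} t^{-\tau-1}\,\omega(t)\,dt,
\end{equation*}
so that once a decay with exponent $\sigma>0$ is in hand, picking $\tau<\sigma$ small enough will force the right-hand side below $2|U|\rho_0^{-\tau}$ (resp.\ $2|U|\alpha^{-\tau}$ in the John case).

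To obtain the decay, I would fix $t\le\rho_0$ and a point $x$ with $\rho_U(x)<t$, and invoke Lemma \ref{lem:chain} to produce a chain $B_0,\dots,B_k$ ending at $x\in B_k$ with $r_k\asymp t/4$. Item (2) forces $r_k\ge(7/9)^k r_0$, hence $k\ge c\log(\rho_0/t)$, while item (4) caps $k\le 9H\ln(H/t)$ (respectively $k\le 9(\beta/\alpha)\ln(8\beta/t)$ in the John case). The end-balls $B_k(x)$ cover $\{\rho_U<t\}$ with volume $\asymp t^\nu$ each, and the combinatorial heart of the proof will be to bound the number of distinct chains of length exactly $k$ by some geometric factor $C^k$; this follows from item (2), which forces bounded-multiplicity branching at each step since $r_{i+1}\in[\tfrac 79 r_i,\tfrac 97 r_i]$ and $G_i\subset B_i\cap B_{i+1}$ has comparable radius. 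Together with the bounded-overlap property $\sum_k(\text{end-balls})\lesssim|U|$, this gives $\omega(t)\le C(t/\rho_0)^\sigma|U|$ with $\sigma=\sigma(n,H,\rho_0/\diam(U))$ in the H\"older case; in the John case, item (5) (where the chain tail sits inside a fixed dilate $(1+5\beta/\alpha)B_i$ of each chain ball) sharpens this to $\sigma\asymp(\alpha/\beta)^\nu$.

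Plugging the decay into the layer-cake identity, splitting the integral at $t=\rho_0$, and using the trivial bound $\omega(t)\le|U|$ for $t\ge\rho_0$ would yield a geometric series in $\tau$ that converges whenever $\tau<\sigma$; calibrating $\tau$ as a fixed fraction of $\sigma$ (which is $\tau_0(\alpha/\beta)^\nu$ in the John case, $\tau_0$ dimensional) will bring the implicit constant below $2$ and give the stated inequality. The main obstacle I foresee is precisely the combinatorial counting of chains of length $k$: one must show, using items (2)--(3) of Lemma \ref{lem:chain}, that the Heisenberg balls of radius $\asymp 2^{-j}\rho_0$ with $4$-dilate lying in $U$ cannot accumulate faster than a geometric rate per unit of $|U|$. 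This is the metric-space Whitney/Boman argument, and it is where the doubling constant of $\mathbb{H}^n$ (depending only on $n$) and the chain-length bound from the H\"older/John hypothesis combine to produce the final dependence of $\sigma$, in the spirit of the estimates of \cite{smith-ste} and \cite{buck}.
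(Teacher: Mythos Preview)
Your layer-cake reduction is fine, but the decay $\omega(t)\lesssim(t/\rho_0)^\sigma|U|$ is not established by the argument you sketch, and that decay is the entire content of the lemma. A bound of $C^k$ on the number of chains of length $k$ via ``bounded-multiplicity branching'' does not by itself yield any $\sigma>0$: item~(2) only constrains the ratio $r_{i+1}/r_i$ to a fixed interval, so chain length $k$ does not pin down the scale $r_k$, and even if it did, the branching constant $C$ coming from the doubling constant of $\mathbb{H}^n$ has no a~priori reason to satisfy $C^k r_k^\nu\to 0$. The global constraint $\sum(\text{end-balls})\lesssim|U|$ that you invoke is a single inequality and cannot be disaggregated into per-scale decay without further input. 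In short, the combination you propose controls the \emph{total} volume of the Whitney cover but not its distribution across scales, which is what $\omega(t)$ asks for.

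The paper bypasses chain counting entirely (and for the H\"older part simply quotes \cite[Theorem~3.3]{buck}). For John domains it fixes a Whitney-type cover $\mathcal D$, sets $h=13\beta/\alpha$, and studies the overlap function $S(x)=\sum_{D\in\mathcal D}\chi_{hD}(x)$. Item~(5) of Lemma~\ref{lem:chain} is used not to count chains but to produce the pointwise lower bound $S(x)\gtrsim\log(\rho_0/\rho_U(x))$, since every chain ball, once dilated by $h$, contains the endpoint. The combinatorics is then absorbed into the maximal-type estimate $\|S\|_p\le Cph^\nu|U|^{1/p}$ of \cite[Lemma~3.4]{buck}; summing over $p$ gives $\int_U e^{aS}\le 2|U|$ for $a\asymp h^{-\nu}\asymp(\alpha/\beta)^\nu$, and hence $\int_U\rho_U^{-\tau}\lesssim|U|\,\alpha^{-\tau}$ for $\tau\asymp a$. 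This maximal inequality is exactly the missing ingredient in your outline; it is what the references you cite actually prove, and it does not reduce to a branching bound along chains.
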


\begin{proof}
The first part (on H\"older domains) is Theorem 3.3 of \cite{buck}.
We estimate~$\tau$ in the case of John domains.

Consider a~countable
family of balls $\mathcal{D}$ covering $U$
such that
$\{\frac{1}{5}D\}_{D\in \mathcal{D}}$ is a~disjoint family,
$\sum_{D\in\mathcal D}\chi_D(x)\leqslant N$ for all $x\in U$, and
$r(D)=\frac{1}{4}\rho_U(x(D))$ for every $D\in \mathcal D$.

Fix a~ball $D\in \mathcal D$. By Lemma \ref{lem:chain},
there is a~chain of balls $B_0,\dots,B_k=D$ that covers the
curve~$\gamma$
joining $x(D)$ and $x_*$.
Since the family $\mathcal D$ covers $U$ there is a~chain of balls $D_0,\dots,D_l=D$
in $\mathcal D$ covering $\gamma$ and satisfying $x_*\in D_0$ and $D_i\cap D_{i+1}\neq\varnothing$
for $i=0,\dots,l-1$. We can take the same $D_0$ for all $D\in \mathcal D$.

Put $D_i=B(y_i,\rho_i)$.
Since $D_i\cap D_{i+1}\neq \varnothing$, it follows that $\frac{3}{5}\rho_{i+1}
\leqslant \rho_i\leqslant \frac{5}{3}\rho_{i+1}$ for $i=0,\dots,l-1$.
Suppose that $\rho_l\leqslant \rho_0$. Then $\rho_0\leqslant (\frac{5}{3})^l \rho_l$
and hence $l\geqslant \log_{\frac{5}{3}}\frac{\rho_0}{\rho_l}$.

The chains $\{B_j\}$ and $\{D_i\}$ cover $\gamma$.
Hence, each $D_i$ intersects some ball $B_j$.
It follows that
$\frac{3}{5}r_j\leqslant \rho_i\leqslant \frac{5}{3} r_j$.
Therefore, for $y\in D$ we have
$$
\rho(y,y_i)\leqslant \rho(y,x_j)+\rho(x_j,y_i)\leqslant (1+5\frac{\beta}{\alpha})r_j+r_j+\rho_i
\leqslant \frac{\rho_i}{3}(13+25\frac{\beta}{\alpha})<13 \frac{\beta}{\alpha}\rho_i.
$$
Thus, $D\subset h D_i$ for all $i=0,\dots,l$
with $h=13\frac{\beta}{\alpha}$.
Putting $S(x)=\sum_{D\in \mathcal D} \chi_{hD}(x)$,
we obtain $S(x)\geqslant\log_{\frac{5}{3}}\frac{\rho_0}{r(D)}$ if $x\in D$
and $r(D)\leqslant \rho_0$.

It is known (see  \cite[Lemma 3.4]{buck} for example),
that there exists a~constant $C>1$ depending only on $n$
such that
$\|S\|_{p,\mathbb{H}^n}\leqslant C p h^\nu
\|\sum_{D\in\mathcal D}\chi_{\frac{1}{5}D}\|_{p,\mathbb{H}^n}
\leqslant C p h^\nu |U|^{1/p}$
for all $p\geqslant 1$.
Hence
$$
\bigl\|e^{aS}\bigr\|_{1,U}\leqslant
|U|+\sum_{m>1}\frac{a^m\|S\|^m_{m,U}}{m!}
\leqslant
|U|
\Bigl(
1+\sum_{m>1}\frac{(Ch^\nu m a)^m}{m!}\Bigr)
\leqslant 2|U|
\quad
\text{if }a=\frac{1}{2C h^\nu e}.
$$

For $b=a(\ln\frac{5}{3})^{-1}>0$, we have
\begin{multline*}
\int_U\frac{dx}{\rho_U(x)^b}
\leqslant
\sum_{D\in \mathcal D} \frac{|D|}{3^b r(D)^b}
\leqslant
\sum_{D\in \mathcal D, r(D)\leqslant \rho_0}
\frac{5^\nu}{3^b \rho_0^b}\int_{\frac{1}{5}D}
e^{b S(x)\ln\frac{5}{3}}\,dx+
\sum_{D\in \mathcal D, r(D)>\rho_0}
\frac{|D|}{3^b \rho_0^b}
\\
\leqslant
\frac{5^\nu}{3^b \rho_0^b}\int_{U}
e^{b S(x)\ln\frac{5}{3}}\,dx
+
\frac{5^\nu}{3^b \rho_0^b}|U|
\leqslant
\frac{3\cdot 5^\nu 5^b}{3^b \alpha^b}
|U|
\leqslant
5^{\nu+1} |U|\frac{1}{\alpha^b}.
\end{multline*}
Here we have used the fact that $\alpha\leqslant \rho_U(x_*)\leqslant
\rho(y_0,x_*)+\rho_U(y_0)\leqslant 5 \rho_0$
and the inequality $b\leqslant 1$.
Applying H\"older's inequality, we obtain the desired inequality
for $\tau=\frac{b}{3\nu}=\tau_0(\frac{\alpha}{\beta})^\nu$
where
$\tau_0=\frac{1}{6 C \nu 13^\nu e\ln\frac{5}{3}}$.
\end{proof}

\begin{proof}[Proof of Theorem {\rm\ref{th:stab_Hol}}]
Consider a~mapping $f\in I(1+\varepsilon, U)$, where $U$ is a~H\"older domain.
Put $F(x)=D_h f(x)$.
By Lemma \ref{the:qualitative_local}
for every ball $B$ with $\frac{10}{3}B\subset U$
there is a~unitary matrix $A_B$ such that
$$
\int_B |F(x)-A_B|\,dx\leqslant
|B|^{1/2}
\biggl(\int_B|F(x)-A_B|^2\,dx\biggr)^{1/2}
\leqslant \sigma |B|
$$
with $\sigma = c_2\varepsilon$.
Thus, it is easy to see that $D_h f$ is a~$BMO$ mapping.
By \cite[Theorem 2.2]{buck},
$$\int_{B'}\exp\bigl(C_1\sigma^{-1}|F(x)-F_{B'}|\bigr)dx\leqslant 16|B'|,
$$
where $B'=\frac{1}{2}B$, $C_1=\frac{1}{12}$, and $F_{B'}$ is the mean value of $F$ over the ball $B'$. The proof of this fact goes
along  the same lines as the proof of the classical John--Nirenberg Theorem.
Consequently,
$$
\int_{B'}
e^{C_1 \sigma^{-1}|F(x)-A_B|}dx
\leqslant
e^{C_1 \sigma^{-1}|F_{B'}-A_B|}
\int_{B'}
e^{C_1 \sigma^{-1}|F(x)-F_{B'}|}dx
\leqslant
16e^{2^{\nu }C_1}|B'|.
$$

Consider the family of balls $\{B(x,\frac{\dist(x,\partial U)}{8})\}_{x\in U}$.
We can choose a~countable subfamily
$\mathcal{F}$
such that
$\bigcup_{B\in \mathcal{F}} B=U$
and
$\{\frac{1}{5}B\mid B\in \mathcal{F}\}$ is a~disjoint family.

Put $A_*=A_{B_0}$, where $B_0=B(x_*,\frac{\dist(x_*,\partial U)}{4})$.
For every $B\in \mathcal{F}$,
there is a~chain of balls $B_0,\dots,B_k=2B$ satisfying conditions~(1)--(4)
of Lemma~\ref{lem:chain}.
Obviously,
$$
|A_{B_i}-A_{B_{i+1}}|\leqslant
|G_i|^{-1} \int_{B_i} |F(x)-A_{B_{i}}|\,dx+
|G_i|^{-1}\int_{B_{i+1}} |F(x)-A_{B_{i+1}}|\,dx
\leqslant
\sigma C_2
$$
with $C_2=2^\nu\bigl(1+\bigl(\frac{9}{7}\bigr)^\nu\bigr)$.
For $0<C_3<C_1$, it follows that
$$
\int_B e^{C_3\sigma^{-1}|F(x)-A_*|} dx
\leqslant
\prod_{i=0}^{k-1} e^{C_3\sigma^{-1}|A_{B_i}-A_{B_{i+1}}|}
\int_B e^{C_3\sigma^{-1}|F(x)-A_{B_k}|}
dx
\leqslant
16e^{kC_2 C_3}e^{2^{\nu }C_1}|B|
$$
Applying $k\leqslant 9H\ln\frac{H}{8r(B)}$, we obtain
\begin{equation}
\label{eq:proof_th_2}
\int_U
e^{C_3\sigma^{-1}|F(x)-A_*|}
dx
\leqslant
\sum_{B\in\mathcal{F}}
\int_B
e^{C_3\sigma^{-1}|F(x)-A_*|}
dx
\leqslant
C_4
\int_U \frac{dx}{\rho_U(x)^{9H C_2C_3}}
<\frac{2C_4}{\rho_U(x_*)^\tau}|U|
\end{equation}
if $C_3$ is small enough so that $9HC_2C_3\leqslant \tau$, where
$\tau$ is as in Lemma \ref{lem:OLD}.
\end{proof}

\begin{proof}[Proof of Theorem {\rm\ref{th:stab_John}}]
Consider a~John domain $U$ with a distinguished point $x_*$
and a~mapping $f$ of class $I(1+\varepsilon,U)$. Put
$B_*=B(x_*,r_*)$, where $r_*=\frac{\dist(x_*,\partial U)}{4}$.

The proof of the first assertion
follows verbatim the proof of Theorem~\ref{th:stab_Hol}
till relation \eqref{eq:proof_th_2}.
We rearrange \eqref{eq:proof_th_2} as
$$
\int_U
e^{C_3\sigma^{-1}|D_hf(x)-A_*|}
dx
\leqslant
C_4 \beta^{9\frac{\beta}{\alpha} C_2 C_3}
\int_U \frac{dx}{\rho_U(x)^{9\frac{\beta}{\alpha} C_2 C_3}}
\leqslant \frac{2C_4 \beta^\tau |U|}{\alpha^\tau}
\leqslant 4C_4 |U|
$$
if $9\frac{\beta}{\alpha} C_2 C_3<\tau=\tau_0(\frac{\alpha}{\beta})^\nu$
where $C_4=64\cdot 2^{2^\nu C_1}5^\nu$.
Here we use the fact that $(\frac{\beta}{\alpha})^{\frac{\alpha}{\beta}}<2$
and, consequently, $(\frac{\beta}{\alpha})^\tau<2$
since $\tau_0<1$.
By H\"older's inequality, we obtain the desired inequality with
$N_1=\frac{C_3}{2+\nu+2^{\nu-3}}=C'(\frac{\alpha}{\beta})^{\nu+1}$.

Let us now prove the second assertion.

Consider a~point $x\in U$ and the chain $B_0,\dots, B_k$ of Lemma \ref{lem:chain}.
Since balls are John domains and, consequently, H\"older domains,
Theorem \ref{th:stab_Hol} implies that, for each $i=0,\dots,k$,
there is an~isometry $\theta_i$
such that
$$
\int_{4B_i}\exp\biggl\{\frac{N_1|D_h f(x)-D_h\theta_i(x)|}{\varepsilon}\biggr\}
dx
\leqslant 16|4B_i|.
$$
Hence,
$
\|D_h f-D_h\theta_i\|_{\nu+1,4B_i}
\leqslant
\bigl(\frac{\varepsilon}{N_1}\bigr)
(16 |4B_i|)^{1/(\nu+1)}
$,
and, by Lemma \ref{lem:8}, we conclude that
$\rho(f(y),\theta_i(y))\leqslant \omega r_i$
for all $y\in B_i$
with $\omega=C_1 (\sqrt{\varepsilon}+\varepsilon)$.

We have
$$
\rho(f(x),\theta_0(x))\leqslant
\rho(f(x),\theta_k(x))+
\sum_{i=0}^{k-1} \rho(\theta_i(x),\theta_{i+1}(x))
$$
and
$$
\rho(\theta_i(y),\theta_{i+1}(y))\leqslant
\rho(f(y),\theta_i(y))
+
\rho(f(y),\theta_{i+1}(y))
\leqslant
\omega(r_i+r_{i+1})
\leqslant
\frac{32}{7} \omega r(G_i)
$$
for every $y\in G_i$.
Consider the case $\frac{32}{7} \omega <\frac{1}{2}$.
Lemma~\ref{lem:isom} yields
$$
\rho(\theta_i(y),\theta_{i-1}(y))\leqslant
C_2\omega r(G_i),
$$
where $C_2=\frac{160}{7} (3+10\frac{\beta}{\alpha})$,
for all $y\in B(x,r)=B_k\subset (3+10\frac{\beta}{\alpha})G_i$ and all $i=1,\ldots,k-1$,

Since $r(G_i)\leqslant \frac{1}{2}r_i$ for $i=0,\dots,k-2$ and
$r(G_{k-1})\leqslant \frac{1}{2}r_k$,
it follows that
$$
\rho(f(x),\theta_0(x))\leqslant
\omega r_k
+C_2 \omega \sum_{i=0}^{k-1}r(G_i)
\leqslant
C_3 \omega \beta,
$$
where $C_3=\frac{1}{4}+C_2$.

Consider the case $\frac{32}{7} \omega \geqslant\frac{1}{2}$.
Without loss of generality we may assume  that
$\varepsilon\geqslant \varepsilon_4$ for a~constant $\varepsilon_4$ of the proof of Lemma \ref{the:qualitative_local},
and
$\theta_i$ is just the left translation satisfying
$\theta_i(x_i)=f(x_i)$.
Thus we can apply Lemma \ref{lem:9}. The theorem follows in the same way.
\end{proof}

\section{Appendix}

\subsection{Application of the embedding theorem}
\begin{lemma}\label{lem:8}
Let $f\in I(1+\varepsilon,B(a,r))$
and
$$
\|D_h f-I\|_{p,B(a,r)}
\leqslant \varepsilon|B(a,r)|^{1/p}.
$$
If $p>\nu$ and $f(a)=a$ then
$$
\rho(f(x),x)\leqslant
C r (\sqrt{\varepsilon}+\varepsilon)
\quad
\text{for all }x\in  B(a,s r)
$$
with $s\in (0,1)$.
The constant $C$ depends only on $n$, $p$, and $s$.
\end{lemma}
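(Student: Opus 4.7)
The natural approach is to study the displacement $u(x) := x^{-1}\cdot f(x)$, which vanishes at $a$ and whose $2n$ horizontal coordinates satisfy $D_h u(x)=D_h f(x)-I$ almost everywhere (because the identity map has horizontal differential $I$ and $X_i z_k=\delta_{ik}$, $X_i\bar z_k=\delta_{ik}$ in the complex frame). The hypothesis then gives $\|\nabla_{\mathcal L} u_j\|_{p,B(a,r)}\leqslant\varepsilon|B(a,r)|^{1/p}$ for $j=1,\dots,2n$. The plan is to estimate the horizontal and vertical coordinates of $u(x)$ separately by the scalar Morrey--Sobolev inequality on the Heisenberg group (valid for $p>\nu$) and then reassemble via the Kor\'anyi gauge.

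\emph{Step 1 (horizontal coordinates).} For $j=1,\dots,2n$ the scalar Morrey inequality, together with $u_j(a)=0$, yields
\[
|u_j(x)|\leqslant C\rho(x,a)^{1-\nu/p}\|\nabla_{\mathcal L}u_j\|_{p,B(a,r)}\leqslant C\varepsilon r\qquad\text{for every }x\in B(a,sr).
\]
Writing $z(u(x))$ for the complex horizontal part of $u(x)$, this gives $|z(u(x))|\leqslant C\varepsilon r$.

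\emph{Step 2 (vertical coordinate).} The vertical component reads $u_{2n+1}(x)=f_{2n+1}(x)-x_{2n+1}-2\operatorname{Im}\langle z,F(x)\rangle$, with $F$ the horizontal part of $f$. Because $f$ is a Sobolev quasi-isometry on the Heisenberg group, its Pansu derivative is a graded Lie algebra morphism, so the contact identity
\[
X_if_{2n+1}(x)=-2\operatorname{Im}\sum_k\overline{F_k(x)}\,X_iF_k(x)
\]
holds almost everywhere for each horizontal field $X_i$. Substituting $F_k=z_k+u_k$ and using $X_i z_k$, $X_i\bar z_k$, $X_i x_{2n+1}$, the terms not involving $D_hu$ cancel and one is left with the key identity
\[
X_iu_{2n+1}(x)=-4\operatorname{Im}\sum_k\overline{z_k(x)}\,X_iu_k(x)-2\operatorname{Im}\sum_k\overline{u_k(x)}\,X_iu_k(x),
\]
so $|X_iu_{2n+1}(x)|\leqslant C\bigl(|z(x)|+|u_h(x)|\bigr)|D_hf(x)-I|$. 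Since $f$ is $(1+\varepsilon)$-Lipschitz and $f(a)=a$, the crude bound $|z(x)|+|u_h(x)|\leqslant Cr$ holds throughout $B(a,r)$, whence $\|\nabla_{\mathcal L}u_{2n+1}\|_{p,B(a,r)}\leqslant C\varepsilon r\cdot|B(a,r)|^{1/p}$. Morrey applied once more, together with $u_{2n+1}(a)=0$, gives $|u_{2n+1}(x)|\leqslant C\varepsilon r^{2}$ on $B(a,sr)$.

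\emph{Step 3 (assembly).} By definition of the Heisenberg gauge, $\rho(u(x))^{4}=|z(u(x))|^{4}+u_{2n+1}(x)^{2}$, hence
\[
\rho(f(x),x)=\rho(u(x))\leqslant|z(u(x))|+\sqrt{|u_{2n+1}(x)|}\leqslant C\varepsilon r+C\sqrt{\varepsilon}\,r=Cr(\sqrt{\varepsilon}+\varepsilon).
\]
The $\sqrt{\varepsilon}$ loss relative to the linear estimate on horizontal coordinates is a genuine feature of the non-isotropic gauge: the vertical coordinate scales like $r^{2}$, so a bound $|u_{2n+1}|\lesssim\varepsilon r^{2}$ contributes $\sqrt{\varepsilon}\,r$ to $\rho$. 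The main obstacle is Step~2, namely producing the identity for $X_iu_{2n+1}$ in which every remaining term carries a factor of $D_hf-I$; once this cancellation is verified via the contact equation, the rest is two routine applications of the scalar Morrey embedding.
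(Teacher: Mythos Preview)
Your overall architecture matches the paper's proof exactly: split $u=x^{-1}\cdot f(x)$ into horizontal and vertical parts, apply the Morrey--Sobolev embedding to the horizontal components, use the contact equation to control $\nabla_{\mathcal L}u_{2n+1}$, and apply Morrey again. The assembly in Step~3 is also correct. The problem is the key identity in Step~2.

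Your claim that ``the terms not involving $D_hu$ cancel'' and that what survives is $-4\operatorname{Im}\sum_k\overline{z_k}\,X_iu_k-2\operatorname{Im}\sum_k\overline{u_k}\,X_iu_k$ is false. If you carry out the substitution $F=z+u_h$ in the contact relation and in the formula for $u_{2n+1}$, the $\overline{z}\cdot D_hu_h$ contributions from the two sources cancel \emph{each other}; what does \emph{not} cancel is a zeroth-order term in $u_h$. In the paper's real notation the exact identity is
\[
\nabla_{\mathcal L}\chi \;=\; 2\bigl((D_hf)^t+I\bigr)J\,\psi,
\]
with $\psi=u_h$ and $\chi=u_{2n+1}$; equivalently, for $i\le n$ one gets $X_i\chi=4\psi_{i+n}+2\sum_j(\psi_{j+n}X_i\psi_j-\psi_jX_i\psi_{j+n})$. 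A quick check with a rotation $f=\varphi_A$ on $\mathbb H^1$ already shows your formula gives the wrong sign. Consequently the pointwise bound $|\nabla_{\mathcal L}u_{2n+1}|\le C(|z|+|u_h|)\,|D_hf-I|$ is not valid (take a point where $D_hf=I$ but $u_h\neq 0$), and your route to $\|\nabla_{\mathcal L}u_{2n+1}\|_{p}\le C\varepsilon r\,|B|^{1/p}$ via the crude bound on $|z|+|u_h|$ breaks down. The correct inequality is $|\nabla_{\mathcal L}u_{2n+1}|\le 2(2+\varepsilon)\,|u_h|$, and this is why the paper first feeds back the Step~1 estimate $|u_h|\le C\varepsilon r$ on an intermediate ball $\tfrac{s+1}{2}B$ before taking the $L_p$ norm of $\nabla_{\mathcal L}\chi$ there and applying Morrey a second time on that smaller ball. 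With this correction in place, your Steps~1 and~3 go through unchanged.
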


\begin{proof}
Put $B=B(a,r)$. Denote the first $2n$
coordinates of $x^{-1}\cdot f(x)$ by  $\psi(x)$
and the last coordinate of $x^{-1}\cdot f(x)$ by $\chi(x)$.
Estimate $\nabla_{\mathcal{L}}\psi_i(x)$ for all
$i=1,\dots,2n$ and $\nabla_{\mathcal{L}}\chi(x)$.
Clearly,
$$
\|\nabla_\mathcal{L}\psi_i\|_{p,B}=
\|\nabla_\mathcal{L} f _i-\nabla_\mathcal{L}x_i\|_{p,B}
\le
\|D_h f-I\|_{p,B},
\quad
i=1,\dots,2n.
$$
The embedding theorem (see \cite{gar-nhieu} for example) yields
$$
|\psi_k(x)|
\le C_1 r^{1-\nu/p}\|\nabla_{\mathcal{L}}\psi_k\|_{p,B}
\le
C_2\varepsilon r\quad
\text{for all }x\in \frac{s+1}{2}B,
\quad k=1,\dots,2n.
$$

We have
$$
\chi(x)= f _{2n+1}(x)-x_{2n+1}
+2\sum_{j=1}^n \bigl(x_j  f _{j+n}(x)-
x_{j+n}  f _j(x)\bigr).
$$
The contact condition
$X_i f(x)\in H_{f(x)}\mathbb H^n$ for $i=1,\dots,2n$
yields
$$
X_i f_{2n+1}(x)=2\sum_{j=1}^n f_{j+n}(x)X_i f_j(x)-f_j(x)X_i f_{j+n}(x),
$$
and then we deduce that
$\nabla_\mathcal{L}\chi(x)= 2\bigl((D_h f (x))^t+I\bigr) J \psi(x)$,
where $J$ is the $2n\times 2n$ matrix defined in \eqref{eq:operator_Q}.

Applying the embedding theorem once again, we obtain
$$
|\chi(x)|
\le C_3 r^{1-
\nu/p}\|\nabla_{\mathcal{L}}\chi\|_{p,\frac{s+1}{2}B}
\le
C_4 r
\|\psi\|_{C(\frac{s+1}{2}B)}
(2+\varepsilon)
\le
C_5 r^{2} \varepsilon(2+\varepsilon)
\quad
\text{for all }x\in s B.
$$
Hence,
$\rho(f(x),x)\leqslant
C_6 r(\sqrt{\varepsilon(2+\varepsilon)}+\varepsilon)
\leqslant C_7 r(\sqrt{\varepsilon}+\varepsilon)
$ for all $x\in B(a,s r)$.
\end{proof}

\subsection{Isometries on the balls}
\begin{lemma}\label{lem:isom}
If $\varphi$ is an~isometry on $\mathbb{H}^n$
with $\rho(\varphi(x),x)\leqslant \varepsilon r$ for all $x\in \overline{B(a,r)}\subset \mathbb{H}^n$
with $\varepsilon<1/2$,
then
$
\rho(\varphi(x),x)\leqslant 5 \varepsilon s r
$
for all $x\in \overline{B(a,sr)}$, $s\geqslant 1$.
\end{lemma}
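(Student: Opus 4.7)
The plan is to exploit the explicit classification of Heisenberg isometries, derive sharp bounds on the parameters of $\varphi$ from the hypothesis on the small ball, and then extrapolate by direct computation. First, by left-invariance of the metric, replacing $\varphi$ with the conjugate $\pi_{a^{-1}}\circ\varphi\circ\pi_a$, I may assume $a=0$. Every isometry of $\mathbb{H}^n$ has the form $\pi_b\circ\varphi_A$ or $\iota\circ\pi_b\circ\varphi_A$; since $\iota$ reverses the sign of the $t$-coordinate, evaluating the reflection case at the two points $(0,\pm r^2)\in\overline{B(0,r)}$ forces a $t$-discrepancy of order $r^2$ at one of them, so $\rho(\varphi(x),x)\geqslant r/\sqrt{2}$, contradicting $\varepsilon<1/2$. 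Thus $\varphi(z,t)=b\cdot(Az,t)$ for some $b\in\mathbb{H}^n$, $A\in U(n)$. A direct computation gives $y:=x^{-1}\cdot\varphi(x)=\bigl(b_z+(A-I)z,\ b_t+2\Image\langle b_z,(A+I)z\rangle-2\Image\langle z,(A-I)z\rangle\bigr)$, which depends only on $z$. The hypothesis translates to $|y_z|\leqslant\varepsilon r$ and $|y_t|\leqslant\varepsilon^2 r^2$ for all $|z|\leqslant r$; evaluating at $z=0$ yields $|b_z|\leqslant\varepsilon r$ and $|b_t|\leqslant\varepsilon^2 r^2$, and varying $z$ in the $|y_z|$-bound gives the preliminary $|A-I|\leqslant 2\varepsilon$.

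The main obstacle is that these preliminary bounds are insufficient, and I would sharpen $|A-I|$ and $|b_z|$ to $O(\varepsilon^2)$ using the $|y_t|$-constraint. The symmetric combination $y_t(z)+y_t(-z)$ cancels the linear-in-$z$ terms and gives $|\Image\langle z,(A-I)z\rangle|\leqslant\varepsilon^2 r^2$ for $|z|\leqslant r$; by homogeneity this extends to all $z$. Writing $A=e^{iH}$ with Hermitian $H$, this quadratic form equals $\langle\cdot,\sin(H)\cdot\rangle$, so $\|\sin H\|\leqslant\varepsilon^2$; the preliminary bound ensures $\|H\|\leqslant\pi/2$, under which $|A-I|=2\|\sin(H/2)\|\leqslant\sqrt{2}\,\|\sin H\|\leqslant 2\varepsilon^2$. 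Symmetrically, the antisymmetric combination $y_t(z)-y_t(-z)=4\Image\langle b_z,(A+I)z\rangle$ evaluated at $z=ib_z r/|b_z|$, together with $\Real\langle b_z,(A+I)b_z\rangle\geqslant|b_z|^2$, yields $|b_z|\leqslant\varepsilon^2 r/2$. Conceptually, the $t$-direction has homogeneous weight $2$ in $\rho$, so a rotation of angle $\theta$ produces a $t$-displacement of order $\theta r^2$, i.e., a $\rho$-displacement of order $\sqrt{\theta}\,r$; requiring this to be $\leqslant\varepsilon r$ forces $\theta\leqslant\varepsilon^2$, much tighter than the naive $\theta\leqslant\varepsilon$ bound from the $z$-coordinate.

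Finally, substituting the refined bounds into $y$ at $x\in\overline{B(0,sr)}$ (so $|z|\leqslant sr$) gives $|y_z|\leqslant\varepsilon^2 r/2+2\varepsilon^2 sr\leqslant 3\varepsilon^2 sr$ and $|y_t|\leqslant\varepsilon^2 r^2+3\varepsilon^2 sr^2+4\varepsilon^2 s^2 r^2\leqslant 8\varepsilon^2 s^2 r^2$ for $s\geqslant 1$; hence $\rho(y)^4=|y_z|^4+y_t^2\leqslant 81\varepsilon^8 s^4 r^4+64\varepsilon^4 s^4 r^4\leqslant(5\varepsilon s r)^4$. The quadratic-form argument sharpening $|A-I|$ to $O(\varepsilon^2)$ is the conceptual core of the proof; the remaining estimates are bookkeeping.
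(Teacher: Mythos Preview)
Your argument is correct, and its overall architecture (reduce to $a=0$, eliminate the reflection branch, extract parameter bounds from the $t$-component, then plug back in at scale $s$) matches the paper's. The technical core, however, is organized differently. The paper works directly with the expression $\alpha+2\Image\langle a,Az\rangle-2\Image\langle z,a+Az\rangle$ and specializes $z=\gamma a$ (with $Aa=\xi a+d$, $d\perp a$) to squeeze out $|a|\leqslant 2\varepsilon^2$ and the quadratic-form bound $|\Image\langle Az,z\rangle|\leqslant 6\varepsilon^2$; it never improves $|A-I|$ beyond the preliminary $2\varepsilon$, so its horizontal estimate is $|y_z|\leqslant(2s+1)\varepsilon$. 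Your symmetrization $y_t(z)\pm y_t(-z)$ cleanly decouples the quadratic and linear contributions, and the spectral step $A=e^{iH}$ upgrades the numerical-range bound $\|\sin H\|\leqslant\varepsilon^2$ to the full operator bound $|A-I|\leqslant\sqrt{2}\,\varepsilon^2$; combined with $|b_z|\leqslant\varepsilon^2 r/2$, this gives the much stronger $|y_z|=O(\varepsilon^2 sr)$. Either route closes the final inequality $\rho(y)^4\leqslant 625\,\varepsilon^4 s^4 r^4$: the paper via $(3s\varepsilon)^4+(21s^2\varepsilon^2)^2$, you via $81\varepsilon^8 s^4 r^4+64\varepsilon^4 s^4 r^4$. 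Your version is more systematic and yields a sharper intermediate conclusion at essentially no extra cost; the paper's version is slightly more elementary, avoiding the exponential parametrization of $U(n)$. (Minor slip: in the reflection case your displacement is actually $\geqslant\sqrt{2}\,r$, not $r/\sqrt{2}$, but either value exceeds $\varepsilon r$.)
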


\begin{proof}
Assume that $B(a,r)=B(0,1)$.
Suppose firstly that $\varphi=\iota\circ \pi_\mathbf{a}\circ \varphi_A$ where
$\mathbf{a}=(a,\alpha)\in \mathbb{H}^n$ with
$a\in \mathbb{C}^n$ and $\alpha\in \mathbb{R}$,
as well as $A\in U(n)$.
If $x=0$ then $|a|<1/2$ and $|\alpha|\leqslant 1/4$.
If $z=0$ and $t=1$ then we arrive at a~contradiction:
$$
1/2 \geqslant \rho\bigl(\varphi(0,1),(0,1)\bigr)
=
\rho\bigl((\overline{a},-\alpha-2)\bigr)
\geqslant
\sqrt{2+\alpha}.
$$

Thus, $\varphi=\pi_\mathbf{a}\circ \varphi_A$, where $\mathbf{a}=(a,\alpha)\in \mathbb{H}^n$,
$a\in \mathbb{C}^n$, $\alpha\in \mathbb{R}$,
and $A\in U(n)$.
We have
$$
x^{-1}\cdot \mathbf{a}\cdot \varphi_A x=
(-z+a+Az, \alpha+2\Image\langle a,Az\rangle -2
\Image \langle z, a+Az\rangle).
$$
Clearly, $|\mathbf{a}|=\rho(\varphi(0),0)\leqslant \varepsilon$
and
$
|Az-z|\leqslant |Az-z+a|+|a|\leqslant 2\varepsilon
$.

We have
\begin{multline*}
2|\Image\langle 2a+ A z,z\rangle |\leqslant
|\alpha+2\Image\langle a, A z\rangle -
2\Image\langle z, Az+a\rangle |\\+
|\alpha|+
2|\Image\langle a, A z-z+a\rangle |
\leqslant 4\varepsilon^2
\quad
\text{for all }z\in \mathbb{ C}^n,\ |z|\leqslant 1.
\end{multline*}
Suppose that $ Aa=\xi a~+d$ and $(d,a)=0$ with
$\xi\in \mathbb{ C}$ and $d\in \mathbb{ C}^n$.
Put $z= \gamma a$,
$|z|\leqslant 1$.
Then
$$
|\Image\langle 2a+ A z,z\rangle |=
|\Image\langle 2a+\gamma (\xi a+d),\gamma a\rangle |
=
|a|^2|\Image(2\overline{\gamma}+ \xi|\gamma|^2)|
\leqslant 2\varepsilon^2.
$$
Suppose that $a\neq 0$. For $\gamma=\frac{1}{|a|}$, we infer that
$|\Image\langle 2a+ A z,z\rangle |=|\Image \xi|\leqslant 2\varepsilon^2$.
For $\gamma=\frac{-i}{|a|}$,
$$
2|a|\leqslant
|a|^2|\Image(2\overline{\gamma}+ \xi|\gamma|^2)|
+|\Image \xi|
\leqslant 4\varepsilon^2.$$
Hence,
$$
|\Image\langle A z,z\rangle |
\leqslant
|\Image\langle 2a+ A z,z\rangle|+|\Image\langle 2a,z\rangle|
\leqslant 6\varepsilon^2.
$$
In the case of $a=0$, we obviously have
$
|\Image\langle  A z,z\rangle|
\leqslant 2\varepsilon^2
$.

Consider $y=\delta_s x\in B(0,s)$.
We obtain
$$
y^{-1}\cdot \mathbf{a}\cdot \varphi_A y
=(-sz+a+sAz, \alpha+2\Image\langle a,A s z\rangle -2
\Image \langle sz, a+Asz\rangle).
$$
Then
$
|-sz+a+sAz|\leqslant s|Az-z|+a\leqslant (2s+1)\varepsilon
$
and
\begin{multline*}
|\alpha+2\Image\langle a,A s z\rangle -2
\Image \langle sz, a+Asz\rangle|
\\
\leqslant
|\alpha|+2|\Image\langle a,Asz+sz\rangle|
+2|\Image \langle sz,Asz\rangle|
\leqslant (1+8s+12s^2)\varepsilon^2.
\end{multline*}
Thus,
$
\rho(\pi_\mathbf{a}\circ \varphi_A(y),y)
\leqslant
5s\varepsilon
$.

Now, take an~arbitrary ball $B(a,r)$
and suppose that $\rho(\varphi(x),x)\leqslant \varepsilon r$
on $B(a,r)$.
The isometry $\theta=\delta_{1/r}\circ
\pi_{-a}\circ \varphi \circ \pi_{a}\circ \delta_{r}$ satisfies
 $\rho(\theta(y),y)\leqslant 5s\varepsilon$ for all $y\in B(0,s)$.
Inserting
$x=a\cdot \delta_r y$ for $x\in B(a,sr)$, we obtain the required estimate.
\end{proof}

The following lemma is obvious.
\begin{lemma}\label{lem:9}
If
$\rho(bx,x)\leqslant \varepsilon$ on $B(a,r)$
then
$\rho(bx,x)<3s\varepsilon$
on $B(a,sr)$, $s\geqslant 1$.
\end{lemma}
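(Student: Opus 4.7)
The plan is to reduce the statement to an explicit calculation with the Heisenberg group law and then exploit the special form of $\rho(bx,x)$ as a function only of the horizontal projection of $x$. Writing $b=(c,\gamma)\in\mathbb{C}^n\times\mathbb{R}$ and $x=(z,t)$, a direct computation using the group multiplication shows
$$
x^{-1}\cdot b^{-1}\cdot x = \bigl(-c,\,-\gamma-4\Im\langle c,z\rangle\bigr),
$$
so that
$$
\rho(bx,x)^4 = |c|^4 + \bigl(\gamma+4\Im\langle c,z\rangle\bigr)^2,
$$
which depends only on $z$, not on $t$. Since the projection of the Heisenberg ball $B(a,r)$ onto the $\mathbb{C}^n$-factor is precisely the Euclidean ball of radius $r$ centered at the horizontal coordinate $a_z$ of $a$, the hypothesis $\rho(bx,x)\le\varepsilon$ on $B(a,r)$ is equivalent to the single inequality $|c|^4+\ell(z)^2\le\varepsilon^4$ for every $z$ with $|z-a_z|\le r$, where $\ell(z)=\gamma+4\Im\langle c,z\rangle$ is an affine function of $z$ with linear part of norm $4|c|$.

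From this I would extract two constraints. Immediately, $|c|\le\varepsilon$ and $|\ell(z)|\le\varepsilon^2$ on the Euclidean $r$-ball. The second bound forces the oscillation of $\ell$ on that ball (which equals $8|c|r$) to be at most $2\varepsilon^2$, so $|c|\le\varepsilon^2/(4r)$. In particular $|\ell(a_z)|\le\varepsilon^2$ and the linear part of $\ell$ has norm at most $\varepsilon^2/r$.

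For $x\in B(a,sr)$, the horizontal coordinate satisfies $|z-a_z|\le sr$, and therefore
$$
|\ell(z)|\le |\ell(a_z)|+4|c|\cdot sr \le \varepsilon^2+s\varepsilon^2 \le 2s\varepsilon^2.
$$
Combined with $|c|\le\varepsilon$ this yields
$$
\rho(bx,x)^4 \le \varepsilon^4+4s^2\varepsilon^4 \le 5s^2\varepsilon^4,
$$
hence $\rho(bx,x)\le 5^{1/4}\sqrt{s}\,\varepsilon < 3s\varepsilon$ for $s\ge 1$.

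No step is a real obstacle: the only mildly tedious part is the group-theoretic computation of $x^{-1}b^{-1}x$, which is standard. The key conceptual point — and the reason the bound is so tame in $s$ — is that $\rho(bx,x)^4$ is a quadratic polynomial in $z$ with leading coefficient controlled by $|c|$, and the bound on the ball of radius $r$ already constrains both $|c|$ and the affine part of $\ell$ independently.
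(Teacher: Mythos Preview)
Your argument is correct. The computation of $x^{-1}b^{-1}x$ is consistent with the group law used in the paper (compare the analogous computation in Lemma~\ref{lem:isom}), the projection of $B(a,r)$ onto the horizontal factor is indeed the Euclidean $r$-ball, and the affine-function estimate for $\ell$ is clean and valid. In fact your bound $5^{1/4}\sqrt{s}\,\varepsilon$ is strictly sharper than the stated $3s\varepsilon$, growing only like $\sqrt{s}$.

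The paper itself gives no proof of this lemma, simply declaring it ``obvious.'' Your write-up is therefore more than adequate: it supplies the explicit details behind that remark, and the only tools needed---the group law and the observation that $\rho(bx,x)$ depends solely on the horizontal coordinate of $x$---are exactly those a reader would reach for.
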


%    Bibliographies can be prepared with BibTeX using amsplain,
%    amsalpha, or (for "historical" overviews) natbib style.
\bibliographystyle{amsplain}

\end{document}